\definecolor{labelkey}{rgb}{0,0.08,0.45}
\definecolor{refkey}{rgb}{0,0.6,0.0}
\definecolor{Brown}{rgb}{0.45,0.0,0.05}
\definecolor{lime}{rgb}{0.00,0.8,0.0}
\definecolor{lblue}{rgb}{0.5,0.5,0.99}
\newcommand{\nnn}{\ensuremath{{n\in{\mathbb N}}}}
\newcommand{\thalb}{\ensuremath{\tfrac{1}{2}}}
\newcommand{\menge}[2]{\big\{{#1}~\big |~{#2}\big\}}
\newcommand{\To}{\ensuremath{\rightrightarrows}}
\newcommand{\fenv}[1]%
{\ensuremath{\,\overrightarrow{\operatorname{env}}_{#1}}}
\newcommand{\benv}[1]%
{\ensuremath{\,\overleftarrow{\operatorname{env}}_{#1}}}
\newcommand{\emp}{\ensuremath{\varnothing}}
\newcommand{\scal}[2]{\left\langle{#1},{#2}  \right\rangle}
\newcommand{\exi}{\ensuremath{\exists\,}}
\newcommand{\zeroun}{\ensuremath{\left]0,1\right[}}
\newcommand{\RR}{\ensuremath{\mathbb R}}
\newcommand{\RP}{\ensuremath{\mathbb{R}_+}}
\newcommand{\RM}{\ensuremath{\mathbb{R}_-}}
\newcommand{\KK}{\ensuremath{\mathbf K}}
\newcommand{\ZZ}{\ensuremath{\mathbf Z}}
\newcommand{\dom}{\ensuremath{\operatorname{dom}}}
\newcommand{\gr}{\ensuremath{\operatorname{gr}}}
\newcommand{\reli}{\ensuremath{\operatorname{ri}}}
\newcommand{\inte}{\ensuremath{\operatorname{int}}}
\newcommand{\sri}{\ensuremath{\operatorname{sri}}}
\newcommand{\ran}{\ensuremath{\operatorname{ran}}}
\newcommand{\zer}{\ensuremath{\operatorname{zer}}}
\newcommand{\cspan}{\ensuremath{\overline{\operatorname{span}}\,}}
\newcommand{\Fix}{\ensuremath{\operatorname{Fix}}}
\newcommand{\Id}{\ensuremath{\operatorname{Id}}}
\newcommand{\minf}{\ensuremath{-\infty}}
\newcommand{\minimize}[2]{\ensuremath{\underset{\substack{{#1}}}{\mathrm{minimize}}\;\;#2 }}
\newcommand{\veet}{\ensuremath{{\scriptscriptstyle\vee}}} % vee tiny
\newtheorem{theorem}{Theorem}[section]
\newtheorem{corollary}[theorem]{Corollary}
\newtheorem{proposition}[theorem]{Proposition}
\newtheorem{definition}[theorem]{Definition}
\theoremstyle{plain}{\theorembodyfont{\rmfamily}
}
\theoremstyle{plain}{\theorembodyfont{\rmfamily}
}
\theoremstyle{plain}{\theorembodyfont{\rmfamily}
}
\theoremstyle{plain}{\theorembodyfont{\rmfamily}
\newtheorem{example}[theorem]{Example}}
\newtheorem{fact}[theorem]{Fact}
\theoremstyle{plain}{\theorembodyfont{\rmfamily}
\newtheorem{remark}[theorem]{Remark}}
\newcommand{\boxedeqn}[1]{%
    \[\fbox{%
        \addtolength{\linewidth}{-2\fboxsep}%
        \addtolength{\linewidth}{-2\fboxrule}%
        \begin{minipage}{\linewidth}%
        \begin{equation}#1\\[+5mm]\end{equation}%
        \end{minipage}%
      }\]%
  }
\begin{document}

\title{\textsc{
Attouch-Th\'era duality revisited:\\
paramonotonicity and operator splitting}}

\author{
Heinz H.\ Bauschke\thanks{
Mathematics, University
of British Columbia,
Kelowna, B.C.\ V1V~1V7, Canada. E-mail:
\texttt{heinz.bauschke@ubc.ca}.},
Radu I.\ Bo\c{t}\thanks{
Technische Universit\"{a}t Chemnitz,
Fakult\"{a}t f\"{u}r Mathematik,
09107 Chemnitz, Germany. E-mail:
\texttt{radu.bot@mathematik.tu-chemnitz.de}.},
Warren L.\ Hare\thanks{
Mathematics,
University of British Columbia,
Kelowna, B.C.\ V1V~1V7, Canada.
E-mail:  \texttt{warren.hare@ubc.ca}.},
~and Walaa M.\ Moursi\thanks{
Mathematics, University of
British Columbia,
Kelowna, B.C.\ V1V~1V7, Canada. E-mail:
\texttt{walaa.moursi@ubc.ca}.}}

\date{October 21, 2011}
\maketitle

\vskip 8mm

\begin{abstract} \noindent
The problem of finding the zeros of the sum of two
maximally monotone operators is of fundamental importance
in optimization and variational analysis.
In this paper, we systematically study Attouch-Th\'era duality for this
problem. We provide new results
related to Passty's parallel sum, to Eckstein and Svaiter's extended
solution set, and to Combettes' fixed point description of the
set of primal solutions.
Furthermore,
paramonotonicity is revealed to be a key property 
because it allows for the recovery
of \emph{all} primal solutions given just 
\emph{one arbitrary} dual solution. 
As an application, we generalize the best approximation results
by Bauschke, Combettes and Luke
[J.~Approx.~Theory~141 (2006), 63--69] from normal cone operators 
to paramonotone operators. 
Our results are illustrated through numerous examples.

\end{abstract}

{\small
\noindent
{\bfseries 2010 Mathematics Subject Classification:}
{Primary 41A50, 47H05, 90C25;
Secondary 47J05, 47H09, 47N10, 49M27, 49N15, 65K05, 65K10, 90C46. 
}

\noindent {\bfseries Keywords:}
Attouch-Th\'era duality, 
Douglas-Rachford splitting,
Eckstein-Ferris-Robinson duality, 
Fenchel duality,
Fenchel-Rockafellar duality,
firmly nonexpansive mapping,
fixed point,
Hilbert space,
maximal monotone operator,
nonexpansive mapping,
paramonotonicity,
resolvent,
subdifferential operator,
total duality.
}

\section{Introduction}

Throughout this paper,
\boxedeqn{
\text{$X$ is
a real Hilbert space with inner
product $\scal{\cdot}{\cdot}$ }
}
and induced norm $\|\cdot\|$.

Let $A\colon X\To X$ be a set-valued operator, i.e.,
$(\forall x\in X)$ $Ax\subseteq X$. 
Recall that $A$ is \emph{monotone} if
\begin{equation}
(\forall (x,x^*)\in\gr A)(\forall (y,y^*)\in\gr A)
\quad
\scal{x-y}{x^*-y^*} \geq 0
\end{equation}
and that $A$ is \emph{maximally monotone} if it is impossible
to properly enlarge the graph of $A$ while keeping monotonicity.
Monotone operators continue to play an important role
in modern optimization and variational analysis;
see, e.g., 
\cite{BC2011},
\cite{BorVanBook}, 
\cite{Brezis},
\cite{BurIus},
\cite{Rock70},
\cite{Rock98},
\cite{Simons1},
\cite{Simons2},
\cite{Zalinescu},
\cite{Zeidler2a},
\cite{Zeidler2b},
and \cite{Zeidler1}.
This is due to the fact that subdifferential operators of 
proper lower semicontinuous convex functions
are maximally monotone, as are continuous linear operators with a
monotone symmetric part. 
The sum of two maximally monotone operators is monotone, and often
maximally monotone if an appropriate constraint qualification is imposed.
Finding the zeros of two maximally monotone operators $A$ and $B$,
i.e., determining 
\begin{equation}
(A+B)^{-1}0 = \menge{x\in X}{0 \in Ax+Bx}, 
\end{equation}
is a problem of great interest
because it covers constrained convex optimization, convex feasibility, and
many others. Attouch and Th\'era provided a comprehensive study of this
(primal) problem in terms of \emph{duality}.
Specifically, they associated with the primal problem
a dual problem.
We set $B^{-\ovee} = (-\Id)\circ B^{-1} \circ (-\Id)$
where $\Id\colon X\to X\colon x\mapsto x$ is the identity operator.
The \emph{Attouch-Th\'era dual problem} is then to determine
\begin{equation}
\big(A^{-1}+B^{-\ovee}\big)^{-1}0 = \menge{x^*\in X}{0 \in
A^{-1}x^*+B^{-\ovee}x^*}. 
\end{equation}
This duality is very beautiful; e.g.,
the dual of the dual problem is the primal problem,
and the primal problem possesses at least one solution
if and only if the same is true for the dual problem. 

\emph{
Our goal in this paper is to systematically study 
Attouch-Th\'era duality, to derive new results, 
and to expose new applications.
}

Let us now summarize our main results.
\begin{itemize}
\item We observe a curious convexity property of the intersection of two
sets involving the graphs of $A$ and $B$ (see Theorem~\ref{t:friday}). 
This relates to Passty's work on the parallel sum as well as to
Eckstein and Svaiter's work on the extended solution set.
\item We provide a new description of the fixed point set of the
Douglas-Rachford splitting operator (see Theorem~\ref{t:mace2});
this refines Combettes' description of $(A+B)^{-1}0$. 
\item We reveal the importance of paramonotonicity:
in this case, the fixed point set of the Douglas-Rachford splitting
operator is a \emph{rectangle} (see Corollary~\ref{c:2GB}) 
and it is possible to recover
\emph{all} primal solutions from \emph{one} dual solution 
(see Theorem~\ref{t:apes}). 
\item We generalize the best approximation results by
Bauschke-Combettes-Luke from normal cone operators to 
paramonotone operators with a common zero
(see Corollary~\ref{c:lousypizza} and Theorem~\ref{t:abstract}). 
\end{itemize}

The remainder of this paper is organized as follows.
In Section~\ref{s:basicduality}, 
we review and slightly refine the basic results on
Attouch-Th\'era duality.
The solution mappings between primal and dual solutions
are studied in Section~\ref{s:solmaps}. 
Section~\ref{s:splitop} deals with the Douglas-Rachford splitting
operator. The results in Section~\ref{s:para} and
Section~\ref{s:poss} underline the importance
of paramonotonicity in the understanding of the zeros of the sum.
Applications to best approximation as well as comments 
on other duality framework are the topic of the final
Section~\ref{s:final}. 

We conclude this introductory section with some notational comments.
The set of zeros of $A$ is written as $\zer A = A^{-1}0$.
The \emph{resolvent} and \emph{reflected resolvent} is defined by
\begin{equation}
\label{e:alden}
J_A = (\Id + A)^{-1}
\;\text{and}\;
R_A = 2J_A-\Id,
\end{equation}
respectively.
It is well known that $\zer A = \Fix J_A := \menge{x\in X}{J_Ax=x}$. 
Moreover,
$J_A$ is firmly nonexpansive if and only if $R_A$ is nonexpansive;
see, e.g.,  \cite{EckBer}, \cite{LawSpin}, or \cite{Minty}. 
We also have the \emph{inverse resolvent identity}
\begin{equation}
\label{e:iri}
J_A + J_{A^{-1}} = \Id 
\end{equation}
and the following very useful Minty parametrization.

\begin{fact}[Minty parametrization]
\label{f:Mintypar}
Let $A\colon X\To X$ be maximally monotone.
Then
$\gr A\to X\colon (a,a^*)\mapsto a+a^*$
is a continuous bijection with continuous
inverse $x\mapsto (J_Ax,x-J_Ax)$; thus,
\begin{equation}
\gr A = \menge{(J_Ax,x-J_Ax)}{x\in X}.
\end{equation}
\end{fact}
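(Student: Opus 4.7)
The plan is to reduce the whole statement to two classical facts about maximally monotone operators: (i) Minty's theorem, which says that $\Id+A$ is surjective whenever $A$ is maximally monotone, and (ii) the fact that the resolvent $J_A=(\Id+A)^{-1}$ is single-valued and firmly nonexpansive. Once these are in hand, everything else is essentially an unpacking of the definition $J_A=(\Id+A)^{-1}$.

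First I would observe the key tautology
\begin{equation*}
(a,a^*)\in\gr A \iff a+a^*\in a+Aa \iff a+a^*\in(\Id+A)a \iff a\in J_A(a+a^*),
\end{equation*}
which identifies $\varphi\colon(a,a^*)\mapsto a+a^*$ on $\gr A$ with the restriction of addition and its candidate inverse with $x\mapsto(J_Ax,x-J_Ax)$. \textbf{Surjectivity} of $\varphi$ then amounts to $\ran(\Id+A)=X$, which is exactly Minty's theorem. \textbf{Injectivity} of $\varphi$ follows from monotonicity: if $a_1+a_1^*=a_2+a_2^*$ with $a_i^*\in Aa_i$, then $a_1-a_2=-(a_1^*-a_2^*)$, so
\begin{equation*}
0\leq\scal{a_1-a_2}{a_1^*-a_2^*}=-\|a_1^*-a_2^*\|^2,
\end{equation*}
which forces $a_1^*=a_2^*$ and hence $a_1=a_2$. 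Taken together, surjectivity and injectivity give that $\varphi$ is a bijection, and that $J_A$ is a single-valued map defined on all of $X$ with $\varphi^{-1}(x)=(J_Ax,x-J_Ax)$; the set equality for $\gr A$ then follows by applying $\varphi^{-1}$ to every $x\in X$.

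For the continuity claims, $\varphi$ is obviously continuous since it is the restriction of the continuous addition map $X\times X\to X$. The continuity of $\varphi^{-1}$ reduces to the continuity of $x\mapsto J_Ax$, which is standard: firm nonexpansiveness of $J_A$ (equivalently, nonexpansiveness of $R_A$) gives in particular $1$-Lipschitz continuity, and then $x\mapsto x-J_Ax$ inherits continuity. The only genuine obstacle is the appeal to Minty's theorem for surjectivity of $\Id+A$; everything else is a direct consequence of monotonicity and the definition of the resolvent, so I would simply cite Minty's theorem (e.g.\ from \cite{BC2011} or \cite{Minty}) and present the argument above as a short sequence of equivalences.
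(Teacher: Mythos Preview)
Your argument is correct, but note that the paper does not actually prove this statement: it is labeled a \emph{Fact} and stated without proof as a standard result (Minty parametrization), to be used later in the paper. So there is no ``paper's own proof'' to compare against. That said, the proof you sketch is exactly the standard one: reduce surjectivity of $(a,a^*)\mapsto a+a^*$ to Minty's surjectivity theorem $\ran(\Id+A)=X$, obtain injectivity (equivalently, single-valuedness of $J_A$) from monotonicity via $0\leq\scal{a_1-a_2}{a_1^*-a_2^*}=-\|a_1-a_2\|^2$, and read off continuity of the inverse from (firm) nonexpansiveness of $J_A$. This is precisely what one finds, e.g., in \cite{BC2011} or in Minty's original paper \cite{Minty}, which the paper cites in the surrounding discussion.
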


Without explicit mentioning it, we employ
standard notation from convex analysis
(see \cite{Rock70}, \cite{Rock98}, or \cite{Zalinescu}). 
Most importantly,
$f^*$ denotes the \emph{Fenchel conjugate} of a function $f$,
and $\partial f$ its \emph{subdifferential operator}. 
The set of all convex lower semicontinuous proper functions on $X$ is
denoted by $\Gamma$ (or $\Gamma_X$ if we need to emphasize the space). 
Finally, we set 
$f^\veet := f\circ (-\Id)$, which yields $\partial(f^\veet) = (\partial
f)^\ovee$.

\section{Duality for monotone operators}

\label{s:basicduality}

In this paper, we study the problem of finding zeros of the sum
of maximally monotone operators. More specifically, we
assume that
\boxedeqn{
\text{$A$ and $B$ are maximally monotone operators on $X$.}
}

\begin{definition}[primal problem]
The \emph{primal problem}, for the ordered pair $(A,B)$,
is to find the zeros of $A+B$.
\end{definition}

At first, it looks strange to define the primal problem with respect to the
(ordered) pair $(A,B)$. The reason we must do this is to associate 
a \emph{unique} dual problem. (The ambiguity arises because addition
is commutative.) It will be quite convenient to set
\boxedeqn{
%\moyo{A}{\vee}^{\vee} = (-\Id)\circ A \circ(-\Id).
A^\ovee = (-\Id)\circ A \circ(-\Id).
}
An easy calculation shows that
$ (A^{-1})^\ovee = (A^{\ovee})^{-1}$, which motivates the notation
\boxedeqn{
\label{e:commute}
A^{-\ovee} := \big(A^{-1}\big)^\ovee = \big(A^{\ovee}\big)^{-1}.
}
(This is similar to the linear-algebraic
notation $A^{-T}$ for invertible square matrices.)

Now since $A$ and $B$ form a pair of maximally monotone operators,
so do $A^{-1}$ and $B^{-\ovee}$:
We thus define the \emph{dual pair}
\begin{equation}
(A,B)^* := (A^{-1},B^{-\ovee}).
\end{equation}
The biduality
\begin{equation}
\label{e:0823:a}
(A,B)^{**} = (A,B)
\end{equation}
holds, since 
$(A^{-1})^{-1}=A$, $(B^{\ovee})^\ovee = B$, and 
$(B^\ovee)^{-1}=(B^{-1})^\ovee$.

We are now in a position to formulate the dual problem.

\begin{definition}[(Attouch-Th\'era) dual problem]
The \emph{(Attouch-Th\'era) dual problem}, for the ordered pair $(A,B)$,
is to find the zeros of $A^{-1} + B^{-\ovee}$.
Put differently: the dual problem for $(A,B)$
is precisely the primal problem for $(A,B)^*$.
\end{definition}

In view of \eqref{e:0823:a}, it is the clear that the primal
problem is precisely the dual of the dual problem, as expected.
One central aim of this paper is to understand the interplay
between the primal and dual solutions that we formally define next.

\begin{definition}[primal and dual solutions]
\label{d:pdsols}
The \emph{primal solutions} are the solutions to the primal problem
and analogously for the \emph{dual solutions}.
We shall abbreviate these sets by
\boxedeqn{
Z := (A+B)^{-1}(0)
\quad\text{and}\quad
K := \big( A^{-1} + B^{-\ovee}\big)^{-1}(0),
}
respectively.
\end{definition}

As observed by Attouch and Th\'era in \cite[Corollary~3.2]{AT},
one has:
\begin{equation}
Z\neq\emp \quad \Leftrightarrow \quad  K\neq\emp.
\end{equation}
Let us make this simple but important equivalence
a little more precise. In order to do so, we
define
\boxedeqn{
\label{e:K_z}
(\forall z\in X)\quad
K_z := (Az) \cap (-Bz)
}
and
\boxedeqn{
\label{e:Z_k}
(\forall k\in X)\quad
Z_k := (A^{-1}k) \cap (-B^{-\ovee}k)
= (A^{-1}k) \cap \big(B^{-1}(-k)\big).
}

As the next proposition illustrates, these objects are intimately
tied to primal and dual solutions defined in
Definition~\ref{d:pdsols}.

\begin{proposition}
\label{p:momday}
Let $z\in X$ and let $k\in X$.
Then the following hold.
\begin{enumerate}
\item
\label{p:momdayi--}
$K_z$ and $Z_k$ are closed convex (possibly empty) subsets of $X$.

\item
\label{p:momdayi-}
$k\in K_z$ $\Leftrightarrow$
$z\in Z_k$;
\item
\label{p:momdayi}
$z\in Z$ $\Leftrightarrow$ $K_z\neq\emp$.
\item
\label{p:momdayii}
$\bigcup_{z\in Z} K_z = K$.
\item
\label{p:momdayiii}
$Z\neq\emp$ $\Leftrightarrow$ $K\neq\emp$.
\item
\label{p:momdayiv}
$k\in K$ $\Leftrightarrow$ $Z_k\neq\emp$.
\item
\label{p:momdayv}
$\bigcup_{k\in K} Z_k = Z$.
\end{enumerate}
\end{proposition}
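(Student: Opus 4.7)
The approach is entirely bookkeeping: all seven items follow by direct unfolding of the definitions \eqref{e:K_z} and \eqref{e:Z_k}, using only maximal monotonicity for (i) and the biduality $(A,B)^{**}=(A,B)$ recorded in \eqref{e:0823:a} to transfer (iii) and (iv) over to (vi) and (vii). No deep tool is needed; the only care required is to keep the $\ovee$/$-\ovee$ identities straight, which is entirely managed by the relation $-B^{-\ovee}k = B^{-1}(-k)$ already displayed in \eqref{e:Z_k}.

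For (i), I would invoke the standard fact that the value of a maximally monotone operator at any point is closed and convex, together with the observation that the reflection $x\mapsto -x$ preserves both properties; $K_z$ and $Z_k$ are then intersections of two closed convex sets, hence closed and convex (possibly empty). For (ii), I would simply run through the chain $k\in K_z$ iff $k\in Az$ and $-k\in Bz$ iff $z\in A^{-1}k$ and $z\in B^{-1}(-k)$ iff $z\in Z_k$, using only the inversion identity $(u,v)\in\gr T \Leftrightarrow (v,u)\in\gr T^{-1}$ and the second form of $Z_k$ in \eqref{e:Z_k}.

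Items (iii)--(v) are then immediate. Part (iii) rewrites ``$z\in Z$'' as ``$\exists\,k$ with $k\in Az$ and $-k\in Bz$'', i.e.\ $K_z\neq\emp$. For (iv), the inclusion $\bigcup_{z\in Z}K_z\subseteq K$ is direct, since any $k\in K_z$ gives $z\in A^{-1}k$ and $-z\in B^{-\ovee}k$, hence $0\in A^{-1}k+B^{-\ovee}k$; conversely, any $k\in K$ admits a decomposition $0=z+(-z)$ with $z\in A^{-1}k$ and $-z\in B^{-\ovee}k$, so that $z\in Z_k$, whence $k\in K_z$ by (ii) and $z\in Z$ by (iii). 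Item (v) drops out of (iii) combined with either direction of (iv).

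Finally, for (vi) and (vii), I would apply (iii) and (iv) to the dual pair $(\tilde A,\tilde B) := (A^{-1},B^{-\ovee}) = (A,B)^*$. A short check using the biduality \eqref{e:0823:a} shows $\tilde Z = K$, $\tilde K = Z$, $\tilde K_k = Z_k$ and $\tilde Z_z = K_z$; the dual versions of (iii) and (iv) are then precisely (vi) and (vii). I do not anticipate any serious obstacle beyond careful handling of the $\ovee$ notation, which is why the relation $-B^{-\ovee}k = B^{-1}(-k)$ is worth keeping in full view throughout.
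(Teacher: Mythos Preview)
Your proposal is correct and follows essentially the same approach as the paper: direct unfolding of the definitions, with maximal monotonicity invoked only for (i) and the remaining items handled by elementary bookkeeping. The only cosmetic difference is that the paper disposes of (vi) and (vii) by saying ``analogous to (iii) and (iv)'' rather than explicitly invoking biduality, but this amounts to the same thing.
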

\begin{proof}
\ref{p:momdayi--}:
Because $A$ and $B$ are maximally monotone,
the sets $Az$ and $Bz$ are closed and convex.
Hence $K_z$ is also closed and convex.
We see analogously that $Z_k$ is closed and convex as well.

\ref{p:momdayi-}:
This is easily verified from the definitions.

\ref{p:momdayi}:
Indeed,
$z\in Z$
$\Leftrightarrow$
$0\in (A+B)z$
$\Leftrightarrow$
$(\exi a^*\in Az\cap (-Bz))$
$\Leftrightarrow$
$(\exi a^*\in K_z)$
$\Leftrightarrow$
$K_z\neq\emp$.

\ref{p:momdayii}:
Take $k\in\bigcup_{z\in Z}K_z$.
Then there exists $z\in Z$ such that
$k\in K_z = Az\cap (-Bz)$.
Hence $z\in A^{-1}k$
and $z\in (-B)^{-1}k = B^{-1}(-\Id)^{-1}k = B^{-1}(-k)$.
Thus $z\in A^{-1}k$ and $-z\in B^{-\ovee}k$.
Hence $0 \in (A^{-1}+B^{-\ovee})k$ and so $k\in K$.
The reverse inclusion is proved analogously.

\ref{p:momdayiii}:
Combine \ref{p:momdayi} and \ref{p:momdayii}.

\ref{p:momdayiv}\&\ref{p:momdayv}:
The proofs are analogous to the ones of \ref{p:momdayi}\&\ref{p:momdayii}.
\end{proof}

Let us provide some examples illustrating these notions.

\begin{example}
\label{ex:skewskew}
Suppose that $X=\RR^2$, and that
 we consider the rotators by $\mp\pi/2$, i.e.,
\begin{equation}
A\colon \RR^2\to\RR^2\colon (x_1,x_2)\mapsto (x_2,-x_1)
\quad\text{and}\quad
B\colon \RR^2\to\RR^2\colon (x_1,x_2)\mapsto (-x_2,x_1).
\end{equation}
Note that  $B=-A = A^{-1} = A^*$, where $A^*$ denote the adjoint
operator.
Hence $A+B\equiv 0$, $Z=X$, and $(\forall z\in Z)$
$K_z = \{Az\} = \{-Bz\}$.
Furthermore,
$A^{-1} = B$ while
the linearity of $B$ implies that $B^{-\ovee} = B^{-1} = -B = A$.
Therefore, $(A,B)^* = (B,A)$. Hence
$K=Z$, while $(\forall k\in K)$ $Z_k = \{A^{-1}k\} = \{Bk\}$.
\end{example}

\begin{example}
\label{ex:normskew}
Suppose that $X=\RR^2$,
that $A$ is the normal cone operator of $\RR^2_+$,
and that $B\colon X\to X\colon (x_1,x_2)\mapsto (-x_2,x_1)$
is the rotator by $\pi/2$.
As already observed in Example~\ref{ex:skewskew}, we have
$B^{-1}=-B$ and $B^{-\ovee} = B^{-1} = -B$.
A routine calculation yields
\begin{equation}
Z = \RP\times\{0\};
\end{equation}
thus, since $B$ is single-valued,
\begin{equation}
(\forall z = (z_1,0)\in Z)\quad
K_z = \big\{-Bz\big\}
= \big\{(0,-z_1)\big\}.
\end{equation}
Thus,
\begin{equation}
K = \bigcup_{z\in Z} K_z = \{0\}\times\RM
\end{equation}
and so
\begin{equation}
(\forall k = (0,k_2)\in K)\quad
Z_k = \big\{-B^{-\ovee}k\big\}
= \big\{Bk\big\}
= \big\{(-k_2,0)\big\}.
\end{equation}
The dual problem is to find the zeros of $A^{-1} +
B^{-\ovee}$, i.e., the zeros of the sum
of the normal cone operator of the negative orthant and the rotator by
$-\pi/2$.
\end{example}

\begin{example}[convex feasibility]
\label{ex:cf}
Suppose that $A=N_U$ and $B=N_V$, where
$U$ and $V$ are closed convex subsets of $X$
such that $U\cap V\neq\varnothing$.
Then clearly $Z = U \cap V$.
Using
\cite[Proposition~2.4.(i)]{BCL04}, we deduce that
$(\forall z\in Z)$ $K_z = N_{\overline{U-V}}(0) = K$.
Note that we do know at least one dual solution: 
$0\in K$.
Thus, by Proposition~\ref{p:momday}\ref{p:momdayi-}\&\ref{p:momdayv},
$(\forall k\in K)$ $Z_k = Z$.
\end{example}

\begin{remark}
The preceding examples
give some credence to the conjecture that
\begin{equation}
\label{e:nopartition}
\left.
\begin{matrix}
z_1\in Z\\
z_2\in Z\\
z_1\neq z_2
\end{matrix}
\right\}
\quad\Rightarrow\quad
\text{either $K_{z_1}=K_{z_2}$ or $K_{z_1}\cap K_{z_2}=\varnothing$.}
\end{equation}
Note that \eqref{e:nopartition} is trivially true whenever
$A$ or $B$ is at most single-valued.
While this conjecture fails in general (see Example~\ref{ex:WW} below),
it does, however, hold true for
the large class of paramonotone operators (see Theorem~\ref{t:apes}).
\end{remark}

\begin{example}
\label{ex:WW}
Suppose that $X=\RR^2$, and
set $U := \RR\times\RR_+$, $V = \RR\times\{0\}$,
and $R\colon X\to X\colon (x_1,x_2)\mapsto (-x_2,x_1)$.
Now suppose that
$A = N_U + R$ and that $B=N_V$.
Then $\dom A = U$ and $\dom B=V$;
hence, $\dom(A+B) = U\cap V = V$.
Let $x = (\xi,0)\in V$.
Then $Ax = \{0\}\times\left]\minf,\xi\right]$
and $Bx = \{0\}\times\RR$.
Hence $Ax\subset \pm Bx$, $(A+B)x=\{0\}\times\RR$ and therefore $Z=V$.
Furthermore,
$K_x = Ax\cap (-Bx) = Ax$.
Now take $y = (\eta,0)\in V = Z$ with $\xi<\eta$.
Then $K_x = Ax\subsetneqq Ay = K_y$ and thus \eqref{e:nopartition}
fails.
\end{example}

\begin{proposition}[common zeros]
\label{p:commonzeros}
$\zer A\cap \zer B\neq\varnothing$
$\Leftrightarrow$
$0\in K$.
\end{proposition}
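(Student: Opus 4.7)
The plan is to derive the proposition as an immediate specialization of Proposition~\ref{p:momday}\ref{p:momdayiv} to the point $k=0$, together with a quick identification of $Z_0$ with $\zer A \cap \zer B$.

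First I would apply Proposition~\ref{p:momday}\ref{p:momdayiv} with $k=0$ to obtain the equivalence
\begin{equation*}
0 \in K \quad\Leftrightarrow\quad Z_0 \neq \varnothing.
\end{equation*}
Next I would unfold the definition \eqref{e:Z_k} of $Z_k$ at $k=0$:
\begin{equation*}
Z_0 = A^{-1}(0) \cap \bigl(B^{-1}(-0)\bigr) = A^{-1}(0)\cap B^{-1}(0) = \zer A \cap \zer B,
\end{equation*}
where I used the standing notation $\zer A = A^{-1}(0)$ and similarly for $B$. Combining these two observations yields the desired equivalence.

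The argument is essentially bookkeeping; there is no real obstacle. The only point worth double-checking is that $-B^{-\ovee}(0) = B^{-1}(0)$, which is immediate from \eqref{e:commute}: indeed $B^{-\ovee}(0) = (B^\ovee)^{-1}(0)$, and $0\in B^\ovee x = -B(-x)$ iff $0\in B(-x)$ iff $-x\in\zer B$, so $(B^\ovee)^{-1}(0) = -\zer B$ and hence $-B^{-\ovee}(0) = \zer B = B^{-1}(0)$. (Alternatively one can just use the second expression for $Z_k$ given in \eqref{e:Z_k}, which already has $B^{-1}(-k)$.)
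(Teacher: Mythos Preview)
Your proof is correct. It differs from the paper's only in that the paper argues directly via $K_z$ (showing $0\in K_z$ for some $z$), whereas you invoke Proposition~\ref{p:momday}\ref{p:momdayiv} and compute $Z_0$; these are dual versions of the same bookkeeping.
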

\begin{proof}
Suppose first that $z\in\zer A\cap\zer B$.
Then $0\in Az$ and $0\in Bz$,
so $0 \in Az\cap (-Bz) = K_z\subseteq K$.
Now assume that $0\in K$.
Then $0\in K_z$, for some $z\in Z$ and
so $0\in Az\cap(-Bz)$. 
Therefore, $0\in\zer A\cap \zer B$.
\end{proof}

\begin{example}
Suppose that $B=A$.
Then $Z = \zer A$, and
$\zer A\neq\varnothing$
$\Leftrightarrow$
$0\in K$.
\end{example}
\begin{proof}
Since $2A$ is maximally monotone and
$A+A$ is a monotone extension of $2A$, we deduce that
that $A+A=2A$.
Hence $Z= \zer(2A) = \zer A$ and the result follows
from Proposition~\ref{p:commonzeros}.
\end{proof}

The following result, observed first by Passty, is very useful.
For the sake of completeness, we include its short proof.

\begin{proposition}[Passty]
\label{p:Passty}
Suppose that, for every $i\in\{0,1\}$, 
$w_i\in Ay_i \cap B(x-y_i)$.
Then
$\scal{y_0-y_1}{w_0-w_1}=0$.
\end{proposition}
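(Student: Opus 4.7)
The plan is very short: this is a two-line application of monotonicity, used twice, where the key algebraic trick is that the ``$x$'' cancels in the $B$-side inner product and flips the sign. I would proceed as follows.

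First, from $w_i \in Ay_i$ for $i \in \{0,1\}$ I have $(y_0,w_0),(y_1,w_1) \in \gr A$, so monotonicity of $A$ gives
\begin{equation*}
\scal{y_0-y_1}{w_0-w_1} \geq 0.
\end{equation*}
Next, from $w_i \in B(x-y_i)$ I have $(x-y_0,w_0),(x-y_1,w_1) \in \gr B$, so monotonicity of $B$ gives
\begin{equation*}
\scal{(x-y_0)-(x-y_1)}{w_0-w_1} \geq 0,
\end{equation*}
which simplifies to $\scal{y_1-y_0}{w_0-w_1} \geq 0$, i.e.\ $\scal{y_0-y_1}{w_0-w_1} \leq 0$. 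Combining the two inequalities yields the claimed equality.

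There is no real obstacle here; the only thing to notice is that the cancellation of $x$ in the second inner product is what flips the sign and forces the two monotonicity inequalities to pinch to equality. This is exactly the mechanism behind Passty's parallel-sum identity.
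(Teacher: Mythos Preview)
Your proof is correct and essentially identical to the paper's own proof: apply monotonicity of $A$ to get $\scal{y_0-y_1}{w_0-w_1}\geq 0$, apply monotonicity of $B$ at the points $x-y_i$ to get the reverse inequality after cancelling $x$, and conclude equality.
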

\begin{proof}
(See \cite[Lemma~14]{Passty86}.)
Since $A$ is monotone,
$0\leq \scal{y_0-y_1}{w_0-w_1}$.
On the other hand, since $B$ is monotone,
$0\leq\scal{(x-y_0)-(x-y_1)}{w_0-w_1} = \scal{y_1-y_0}{w_0-w_1}$.
Altogether, 
$\scal{y_0-y_1}{w_0-w_1} = 0$. 
\end{proof}

\begin{corollary}
\label{c:Passty}
Suppose that $z_1$ and $z_2$ belong to $Z$, that
$k_1\in K_{z_1}$, and that $k_2\in K_{z_2}$.
Then $\scal{k_1-k_2}{z_1-z_2}=0$.
\end{corollary}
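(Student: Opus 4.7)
The plan is to exploit the fact that if $k\in K_z$, then the pair $(z,k)$ lies in $\gr A$ while the pair $(z,-k)$ lies in $\gr B$, so that monotonicity of the two operators yields opposite inequalities that must cancel.

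First, I would unpack the membership $k_i\in K_{z_i}=(Az_i)\cap(-Bz_i)$ to record that $(z_i,k_i)\in\gr A$ and simultaneously $(z_i,-k_i)\in\gr B$ for each $i\in\{1,2\}$. This translation is the key conceptual step: it is the reason that \emph{both} monotonicity hypotheses on $A$ and on $B$ can be brought to bear on the \emph{same} pair of first coordinates $z_1,z_2$, which is what makes the two inequalities combine rather than merely stack.

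Next, I would apply monotonicity of $A$ to $(z_1,k_1),(z_2,k_2)\in\gr A$ to obtain $\scal{z_1-z_2}{k_1-k_2}\geq 0$, and then apply monotonicity of $B$ to $(z_1,-k_1),(z_2,-k_2)\in\gr B$ to obtain $\scal{z_1-z_2}{(-k_1)-(-k_2)}\geq 0$, which is equivalent to $\scal{z_1-z_2}{k_1-k_2}\leq 0$. Combining these opposite inequalities yields the claimed identity $\scal{k_1-k_2}{z_1-z_2}=0$.

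I do not expect any genuine obstacle; the one bookkeeping point worth flagging is that Proposition \ref{p:Passty} as stated cannot be invoked verbatim as a black box. Its hypothesis is $w_i\in Ay_i\cap B(x-y_i)$ with a \emph{single common} $x$, whereas here the second membership reads $-k_i\in Bz_i$ with $z_i$ varying from $i=1$ to $i=2$, and no reparametrization $y_i\leftrightarrow z_i$ with a fixed $x$ reproduces this. Rather than try to force such a parametrization, I would simply replay Passty's two-line monotonicity trick in the current notation, which is precisely what the argument above does.
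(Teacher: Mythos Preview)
Your direct argument is correct and is precisely the two-inequality trick underlying Proposition~\ref{p:Passty}, so in substance your proof and the paper's coincide. However, your closing remark is off: the proposition \emph{can} be invoked as a black box. The paper applies Proposition~\ref{p:Passty} with $B$ replaced by the maximally monotone operator $B^\ovee$ and with $x=0$; then the hypothesis reads $w_i\in Ay_i\cap B^\ovee(0-y_i)=Ay_i\cap(-By_i)=K_{y_i}$, which is exactly $k_i\in K_{z_i}$ upon setting $y_i=z_i$ and $w_i=k_i$. You looked only for a reparametrization with the \emph{same} $B$, and there indeed is none; the trick is to swap $B$ for $B^\ovee$.
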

\begin{proof}
Apply Proposition~\ref{p:Passty}
(with $B$ replaced by $B^\ovee$ and at $x=0$). 
%$(\forall i\in \{1,2\})$
%$k_i \in K_{z_i}$
%$\Leftrightarrow$ $k_i \in (Az_i) \cap (-Bz_i)$.
%Thus, the monotonicity of $B$ yields
%\begin{equation}
%0 \leq \scal{(-k_1)-(-k_2)}{z_1-z_2} = \scal{k_2-k_1}{z_1-z_2},
%\end{equation}
%while the monotonicity of $A$ implies
%\begin{equation}
%0 \leq \scal{k_1-k_2}{z_1-z_2}.
%\end{equation}
%Altogether, $\scal{k_1-k_2}{z_1-z_2}=0$.
\end{proof}

\section{Solution mappings $\KK$ and $\ZZ$}

\label{s:solmaps}

We now interpret the families of sets $(K_z)_{z\in X}$
and $(Z_k)_{k\in X}$ as set-valued operators by setting
\boxedeqn{
\KK\colon X\To X\colon z\mapsto K_z
\quad\text{and}\quad
\ZZ\colon X\To X\colon k\mapsto Z_k.
}

Let us record some basic properties of these
fundamental operators. 

\begin{proposition}
\label{p:ZZKK}
The following hold.
\begin{enumerate}
\item
\label{p:ZZKKi}
$\gr\KK=\gr A \cap \gr(-B)$
and $\gr\ZZ = \gr A^{-1}\cap \gr (-B^{-\ovee})$.
\item
\label{p:ZZKKi+}
$\dom\KK = Z$, $\ran\KK=K$, $\dom \ZZ=K$, and $\ran\ZZ = Z$.
\item
\label{p:ZZKKii}
$\gr\KK$ and $\gr\ZZ$ are closed sets.
\item
\label{p:ZZKKiii}
The operators $\KK,-\KK,\ZZ,-\ZZ$ are monotone.
\item
\label{p:ZZKKiv}
$\KK^{-1} = \ZZ$.

\end{enumerate}
\end{proposition}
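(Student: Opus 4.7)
The plan is to work through the five items essentially from the definitions, with Corollary~\ref{c:Passty} carrying the weight in~\ref{p:ZZKKiii}.

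For \ref{p:ZZKKi}, I would just unpack the definition of $\KK$: $(z,k)\in\gr\KK$ means $k\in K_z = Az\cap(-Bz)$, which is the same as saying $(z,k)\in\gr A$ and $(z,k)\in\gr(-B)$. The statement for $\ZZ$ is obtained the same way from \eqref{e:Z_k}. Then \ref{p:ZZKKi+} is immediate: $\dom\KK=\menge{z\in X}{K_z\neq\emp}$, which equals $Z$ by Proposition~\ref{p:momday}\ref{p:momdayi}, and $\ran\KK=\bigcup_{z\in X}K_z=\bigcup_{z\in Z}K_z=K$ by Proposition~\ref{p:momday}\ref{p:momdayii}; the dual identities follow from \ref{p:momdayiv} and \ref{p:momdayv} of the same proposition. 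For \ref{p:ZZKKii}, I would invoke the standard fact that the graph of a maximally monotone operator is closed; hence $\gr A$ and $\gr(-B)$ are closed, and \ref{p:ZZKKi} exhibits $\gr\KK$ as their intersection. The same argument applies to $\gr\ZZ$.

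Item \ref{p:ZZKKiii} is the one piece that is not purely notational. Given $(z_1,k_1),(z_2,k_2)\in\gr\KK$, Corollary~\ref{c:Passty} gives $\scal{k_1-k_2}{z_1-z_2}=0$, which simultaneously shows that both $\KK$ and $-\KK$ are monotone. The statement for $\ZZ$ follows by applying the same reasoning to the dual pair $(A,B)^*$, or equivalently by using the identity $\KK^{-1}=\ZZ$ proved below together with the fact that a set-valued operator is monotone iff its inverse is.

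Finally, for \ref{p:ZZKKiv}, I would check that $\gr\KK^{-1}=\gr\ZZ$. Starting from $(z,k)\in\gr\KK$, i.e. $k\in Az$ and $-k\in Bz$, one rewrites these as $z\in A^{-1}k$ and $z\in B^{-1}(-k)=(-\Id)\circ B^{-\ovee}(k)$, so $z\in A^{-1}k\cap(-B^{-\ovee}k)=Z_k=\ZZ k$. The only thing to be a bit careful with is the unwinding of $B^{-\ovee}=(-\Id)\circ B^{-1}\circ(-\Id)$ and the identity \eqref{e:commute}, but this is bookkeeping rather than a real obstacle.
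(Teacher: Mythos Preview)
Your proof is correct and matches the paper almost verbatim for items \ref{p:ZZKKi}, \ref{p:ZZKKi+}, \ref{p:ZZKKii}, and \ref{p:ZZKKiv}. The only substantive divergence is in \ref{p:ZZKKiii}: you invoke Corollary~\ref{c:Passty} to obtain $\scal{z_1-z_2}{k_1-k_2}=0$, whereas the paper argues more directly from \ref{p:ZZKKi} that $\gr\KK\subseteq\gr A$ forces $\KK$ to inherit monotonicity from $A$, and $\gr(-\KK)\subseteq\gr B$ gives monotonicity of $-\KK$ from $B$. The paper's route is lighter (it avoids Passty's lemma entirely and uses only graph inclusion), while yours extracts the sharper orthogonality conclusion along the way---a fact the paper only records later in Corollary~\ref{c:apes}\ref{c:apesii} under the additional paramonotonicity hypothesis, even though Corollary~\ref{c:Passty} already gives it in general.

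One small wording issue in \ref{p:ZZKKii}: the operator $-B$ is not itself maximally monotone, so ``the graph of a maximally monotone operator is closed; hence $\gr(-B)$ is closed'' is slightly off as stated. What you want is that $\gr B$ is closed (since $B$ is maximally monotone) and $\gr(-B)$ is its image under the homeomorphism $(x,y)\mapsto(x,-y)$. This is surely what you intended, but it is worth phrasing precisely.
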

\begin{proof}
\ref{p:ZZKKi}:
This is clear from the definitions.

\ref{p:ZZKKi+}:
This follows from Proposition~\ref{p:momday}.

\ref{p:ZZKKii}:
Since $A$ and $B$ are maximally monotone,
the sets $\gr A$ and $\gr B$ are closed.
Hence, by \ref{p:ZZKKi}, $\gr\KK$ is closed and similarly
for $\gr \ZZ$.

\ref{p:ZZKKiii}:
Since $\gr\KK \subseteq\gr A$ and $A$ is monotone, we see that
$\KK$ is monotone.
Similarly, since $B$ is monotone and $\gr(-\KK)\subseteq\gr B$,
we obtain the monotonicity of $-\KK$.
The proofs for $\pm\ZZ$ are analogous.

\ref{p:ZZKKiv}:
Clear from Proposition~\ref{p:momday}\ref{p:momdayi-}.
\end{proof}

In Proposition~\ref{p:momday}\ref{p:momdayi}
we observed the closedness and convexity
of $K_z$ and $Z_k$.
In view of Proposition~\ref{p:momday}\ref{p:momdayi}\&\ref{p:momdayv},
the sets of primal and dual solutions are both
unions of closed convex sets.
It would seem that we cannot a priori deduce convexity of these
solution sets because unions of convex sets need not be convex.
However, not only are $Z$ and $K$ indeed convex, but so are
$\gr\ZZ$ and $\gr\KK$.
This surprising result, which is basically contained in works
by Passty \cite{Passty86} and by Eckstein and Svaiter
\cite{EckSvai08,EckSvai09}, is best stated by using 
the parallel sum, a notion systematically explored
by Passty in  \cite{Passty86}.

\begin{definition}[parallel sum]
The \emph{parallel sum} of $A$ and $B$ is 
\begin{equation}
A \Box B := (A^{-1}+B^{-1})^{-1}.
\end{equation}
\end{definition}
The notation we use for the parallel sum 
(see \cite[Section~24.4]{BC2011}) is
nonstandard but highly convenient: indeed,
for sufficiently nice convex functions
$f$ and $g$, one has
$\partial(f\Box g) = (\partial f)\Box (\partial g)$
(see \cite[Theorem~28]{Passty86}, \cite[Proposition~4.2.2]{Moudafi96},
or \cite[Proposition~24.27]{BC2011}).

The proof of the following result is contained in the proof of
\cite[Theorem~21]{Passty86}, although Passty stated a much weaker
conclusion. For the sake of completeness, we present his proof.

\begin{theorem}
\label{t:friday}
For every $x\in X$, the set 
\begin{equation}
\label{e:friday1}
\big(\gr A \big)
\cap
\big((x,0)-\gr(-B)\big)
=\menge{(y,w)\in\gr A}{(x-y,w)\in\gr B}
\end{equation}
is convex. 
\end{theorem}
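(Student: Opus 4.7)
The plan is to take two points $(y_0, w_0)$ and $(y_1, w_1)$ in the set described by the right-hand side of \eqref{e:friday1}, fix $\lambda \in [0,1]$, set
\[
y_\lambda := (1-\lambda) y_0 + \lambda y_1, \qquad w_\lambda := (1-\lambda)w_0 + \lambda w_1,
\]
and verify both that $(y_\lambda, w_\lambda) \in \gr A$ and that $(x - y_\lambda, w_\lambda) \in \gr B$.

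The first move is to apply Proposition~\ref{p:Passty} (with the ambient point $x$ and the witnesses $w_i \in Ay_i \cap B(x-y_i)$) to obtain the crucial orthogonality
\[
\scal{y_0 - y_1}{w_0 - w_1} = 0.
\]
Expanding and rearranging this yields the bilinear identity $\scal{y_0}{w_1} + \scal{y_1}{w_0} = \scal{y_0}{w_0} + \scal{y_1}{w_1}$, which in turn implies the ``cross=diagonal'' identity
\[
\scal{y_0 - a}{w_1 - a^*} + \scal{y_1 - a}{w_0 - a^*} = \scal{y_0 - a}{w_0 - a^*} + \scal{y_1 - a}{w_1 - a^*}
\]
for every pair $(a,a^*) \in X \times X$.

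Next, to establish $(y_\lambda, w_\lambda) \in \gr A$, I would invoke the maximal monotonicity of $A$: it suffices to check $\scal{y_\lambda - a}{w_\lambda - a^*} \geq 0$ for every $(a, a^*) \in \gr A$. Bilinearly expanding this inner product gives four terms weighted by $(1-\lambda)^2$, $(1-\lambda)\lambda$, $\lambda(1-\lambda)$, and $\lambda^2$. The two ``diagonal'' terms are nonnegative by monotonicity of $A$, and the cross-terms collapse onto the diagonal sum via the identity above. Recombining weights, one finds
\[
\scal{y_\lambda - a}{w_\lambda - a^*} = (1-\lambda)\scal{y_0 - a}{w_0 - a^*} + \lambda\scal{y_1 - a}{w_1 - a^*} \geq 0,
\]
so maximality forces $(y_\lambda, w_\lambda) \in \gr A$.

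Finally, the analogous argument applied to $B$ goes through verbatim: setting $u_i := x - y_i$ we have $w_i \in Bu_i$, the Passty orthogonality $\scal{u_0 - u_1}{w_0 - w_1} = -\scal{y_0 - y_1}{w_0 - w_1} = 0$ still holds, and $(1-\lambda)u_0 + \lambda u_1 = x - y_\lambda$, so maximality of $B$ yields $(x - y_\lambda, w_\lambda) \in \gr B$. The main conceptual obstacle is noticing the algebraic collapse of the cross terms; this is precisely what Passty's orthogonality lemma is designed to deliver, and the rest is just bookkeeping.
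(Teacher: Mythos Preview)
Your proof is correct and follows essentially the same route as the paper's: invoke Passty's orthogonality (Proposition~\ref{p:Passty}), bilinearly expand $\scal{y_\lambda-a}{w_\lambda-a^*}$, and appeal to maximal monotonicity of $A$ and then $B$. Your algebraic organization is in fact slightly cleaner---your ``cross=diagonal'' identity yields the exact equality $\scal{y_\lambda-a}{w_\lambda-a^*}=(1-\lambda)\scal{y_0-a}{w_0-a^*}+\lambda\scal{y_1-a}{w_1-a^*}$, whereas the paper first discards the diagonal terms via monotonicity and then separately bounds the cross terms below by zero---but the two arguments are the same in substance.
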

\begin{proof}
(See also \cite[Proof of Theorem~21]{Passty86}.) 
The identity \eqref{e:friday1} is easily verified. 
To tackle convexity, 
for every $i\in\{0,1\}$ take 
$(y_i,w_i)$ from the intersection \eqref{e:friday1}; equivalently, 
\begin{equation}
\label{e:mace0}
(\forall i\in\{0,1\})\quad
w_i\in Ay_i \cap B(x-y_i).
\end{equation}
By Proposition~\ref{p:Passty}, 
\begin{equation}
\label{e:mace0+}
\scal{y_0-y_1}{w_0-w_1}=0.
\end{equation}
Now let $t\in[0,1]$,
set $(y_t,w_t) = (1-t)(y_0,w_0) + t(y_1,w_1)$,
and take $(a,a^*)\in\gr A$. 
Using \eqref{e:mace0} and the monotonicity of $A$ in \eqref{e:mace1}, 
we obtain
\begin{subequations}
\label{e:friday2}
\begin{align}
\scal{ y_t - a}{w_t-a^*}
&= \scal{(1-t)(y_0-a)+t(y_1-a)}{(1-t)(w_0-a^*) + t(w_1-a^*)}\\
&= (1-t)^2\scal{y_0-a}{w_0-a^*} + t^2\scal{y_1-a}{w_1-a^*}\\
&\quad + (1-t)t\big(\scal{y_0-a}{w_1-a^*}+\scal{y_1-a}{w_0-a^*} \big)\\
&\geq  (1-t)t\big(\scal{y_0-a}{w_1-a^*}+\scal{y_1-a}{w_0-a^*} \big).
\label{e:mace1}
\end{align}
\end{subequations}
Thus, using again monotonicity of $A$ and recalling
\eqref{e:mace0+}, we obtain 
\begin{subequations}
\label{e:friday3}
\begin{align}
\scal{y_0-a}{w_1-a^*}+\scal{y_1-a}{w_0-a^*}
&= \scal{y_0-a}{w_1-w_0} + \scal{y_0-a}{w_0-a^*}\\
&\quad + \scal{y_1-a}{w_0-w_1}+\scal{y_1-a}{w_1-a^*}\\
&= \scal{y_1-y_0}{w_0-w_1}\\
&\quad + \scal{y_0-a}{w_0-a^*} + \scal{y_1-a}{w_1-a^*}\\
&\geq \scal{y_1-y_0}{w_0-w_1}\\
&= 0.
\end{align}
\end{subequations}
Combining \eqref{e:friday2} and \eqref{e:friday3},
we obtain $\scal{y_t-a}{w_t-a^*}\geq 0$.
Since $(a,a^*)$ is an arbitrary element of $\gr A$ and $A$ is maximally
monotone, we deduce that
$(y_t,w_t)\in\gr A$.
A similar argument yields $(x-y_t,w_t)\in\gr B$. 
Therefore, $(y_t,w_t)$ is an element of the intersection \eqref{e:friday1}.
\end{proof}

Before returning to the objects of interest, we record Passty's 
\cite[Theorem~21]{Passty86} as a simple corollary.

\begin{corollary}[Passty]
\label{c:passty} 
For every $x\in X$, the set $(A\Box B)x$ is convex.
\end{corollary}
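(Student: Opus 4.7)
The plan is to recognize $(A\Box B)x$ as the image, under the linear projection onto the second coordinate, of the intersection that Theorem~\ref{t:friday} has just shown to be convex; the corollary then reduces to the trivial fact that linear images preserve convexity.

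More concretely, I would first unpack the definition of $A\Box B = (A^{-1}+B^{-1})^{-1}$. For a given $x\in X$, the condition $w\in (A\Box B)x$ is equivalent to $x\in A^{-1}w + B^{-1}w$, i.e., to the existence of $y\in X$ with $y\in A^{-1}w$ and $x-y\in B^{-1}w$, which is the same as $w\in Ay$ and $w\in B(x-y)$. Writing this as a statement about pairs, we see
\begin{equation}
(A\Box B)x = \menge{w\in X}{\exi y\in X,\;(y,w)\in\gr A\text{ and }(x-y,w)\in\gr B},
\end{equation}
so $(A\Box B)x$ is precisely the image of the set
\begin{equation}
S_x := \menge{(y,w)\in\gr A}{(x-y,w)\in\gr B}
\end{equation}
under the continuous linear projection $X\times X\to X\colon (y,w)\mapsto w$.

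Next I would invoke Theorem~\ref{t:friday}, which asserts exactly that $S_x$ is convex. Since the image of a convex set under a linear map is convex, $(A\Box B)x$ is convex, as claimed. There is no real obstacle here beyond correctly unwinding the parallel-sum definition; all the substance has been absorbed into Theorem~\ref{t:friday} (which in turn relies on the Passty orthogonality identity from Proposition~\ref{p:Passty}).
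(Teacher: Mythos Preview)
Your proof is correct and follows essentially the same approach as the paper: both recognize $(A\Box B)x$ as the image of the set $\menge{(y,w)\in\gr A}{(x-y,w)\in\gr B}$ under the linear projection $(y,w)\mapsto w$, invoke Theorem~\ref{t:friday} for convexity of that set, and conclude via preservation of convexity under linear maps. The only cosmetic difference is that you unwind the parallel-sum definition explicitly, whereas the paper cites the identity $(A\Box B)x = \bigcup_{y\in X}\big(Ay\cap B(x-y)\big)$ as a direct computation.
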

\begin{proof}
Let $x\in X$. 
Since $(y,w)\to w$ is linear
and $\menge{(y,w)\in\gr A}{(x-y,w)\in\gr B}$
is convex (Theorem~\ref{t:friday}), we deduce that
\begin{equation}
\menge{w\in X}{(\exi y\in X)\; w\in Ay\cap B(x-y)}
\text{ is convex.}
\end{equation}
On the other hand, 
a direct computation or \cite[Lemma~2]{Passty86} implies that
$(A\Box B)x = \bigcup_{y\in X}\big( Ay\cap B(x-y)\big)$.
Altogether, $(A\Box B)x$ is convex. 
\end{proof}

\begin{corollary}
\label{c:strange}
For every $x\in X$, the set $(\gr A)\cap ((x,0)+\gr(-B))$ is convex.
\end{corollary}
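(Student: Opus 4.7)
The plan is to reduce Corollary~\ref{c:strange} to Theorem~\ref{t:friday} by replacing $B$ with $B^\ovee$. Since $B$ is maximally monotone, so is $B^\ovee=(-\Id)\circ B\circ(-\Id)$: if $(b_1,b_1^*)$ and $(b_2,b_2^*)$ both lie in $\gr B$, then the same holds after sending each to $(-b_i,-b_i^*)$, and $\scal{-b_1-(-b_2)}{-b_1^*-(-b_2^*)}=\scal{b_1-b_2}{b_1^*-b_2^*}\ge 0$; maximality transfers in the same way. So the pair $(A,B^\ovee)$ satisfies the standing assumption of the paper and Theorem~\ref{t:friday} applies to it.

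Next I would unravel the sign bookkeeping. From $B^\ovee(y)=-B(-y)$ one reads off
\[
\gr(-B^\ovee)=\menge{(-b,b^*)}{(b,b^*)\in\gr B},
\]
and therefore
\[
(x,0)-\gr(-B^\ovee)
= \menge{(x+b,-b^*)}{(b,b^*)\in\gr B}
= (x,0)+\gr(-B).
\]
Applying Theorem~\ref{t:friday} to $(A,B^\ovee)$ at the point $x$ then says exactly that $(\gr A)\cap\big((x,0)-\gr(-B^\ovee)\big)$ is convex; by the identity above, this set equals $(\gr A)\cap\big((x,0)+\gr(-B)\big)$, which is the conclusion we want.

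The only real obstacle is a pedestrian one: getting the chain of signs in the $\ovee$ notation correct, since one must carry out the substitution in two places ($B$ inside $\gr$ and $B$ inside $-B$). Once that is done no new monotone-operator argument is needed, and no separate proof via the Passty-type orthogonality calculation (as in Theorem~\ref{t:friday}) needs to be redone, because Theorem~\ref{t:friday} is being invoked as a black box.
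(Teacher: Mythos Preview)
Your proof is correct and follows essentially the same route as the paper: apply Theorem~\ref{t:friday} to the pair $(A,B^\ovee)$, noting that $B^\ovee$ is maximally monotone and that $(x,0)-\gr(-B^\ovee)=(x,0)+\gr(-B)$ (the paper phrases the latter as $-\gr(-B^\ovee)=\gr(-B)$). Your only addition is spelling out the sign bookkeeping in more detail, which is fine.
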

\begin{proof}
On the one hand, $-\gr(-B^\ovee)=\gr(-B)$. 
On the other hand, $B^\ovee$ is maximally monotone.
Altogether, Theorem~\ref{t:friday} (applied with $B^\ovee$ instead of $B$)
implies that 
$(\gr A) \cap ((x,0)-\gr(-B^\ovee)) = (\gr A) \cap ((x,0)+\gr(-B))$
is convex. 
\end{proof}

\begin{remark}
Theorem~\ref{t:friday} and Corollary~\ref{c:strange}
imply that the intersections $(\gr A)\cap \pm(\gr -B)$ are convex. 
This somewhat resembles works by Mart{\'{\i}}nez-Legaz 
(see \cite[Theorem~2.1]{ML08}) and by Z\u{a}linescu \cite{Zali06},
who encountered convexity when studying the Minkowski sum/difference 
$(\gr A)\pm (\gr -B)$. 
\end{remark}

\begin{corollary}[convexity]
\label{c:convex}
The sets $\gr\ZZ$ and $\gr\KK$ are convex;
consequently,
$Z$ and $K$ are convex.
\end{corollary}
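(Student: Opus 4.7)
The plan is to obtain the convexity of $\gr\KK$ and $\gr\ZZ$ as essentially immediate consequences of the machinery already assembled, and then to get convexity of $Z$ and $K$ by projection.

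First I would observe that by Proposition~\ref{p:ZZKK}\ref{p:ZZKKi}, $\gr\KK = (\gr A)\cap\gr(-B)$. Specializing Corollary~\ref{c:strange} to the choice $x=0$ gives exactly
\begin{equation*}
(\gr A)\cap\bigl((0,0)+\gr(-B)\bigr) = (\gr A)\cap\gr(-B) = \gr\KK,
\end{equation*}
so $\gr\KK$ is convex. The same argument applies to the dual pair: since $(A,B)^* = (A^{-1},B^{-\ovee})$ is itself a pair of maximally monotone operators (so that Theorem~\ref{t:friday} and hence Corollary~\ref{c:strange} applies to it), the intersection $\gr A^{-1} \cap \gr(-B^{-\ovee}) = \gr\ZZ$ is likewise convex.

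To conclude convexity of $Z$ and $K$, I would invoke Proposition~\ref{p:ZZKK}\ref{p:ZZKKi+}: we have $Z = \dom\KK$ and $K = \ran\KK$, which are the images of the convex set $\gr\KK\subseteq X\times X$ under the (linear) coordinate projections $(z,k)\mapsto z$ and $(z,k)\mapsto k$, respectively. Linear images of convex sets are convex, so both $Z$ and $K$ are convex.

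There is essentially no obstacle here, since Theorem~\ref{t:friday} has already done the hard monotonicity/Passty work; the only thing to be careful about is noting that Corollary~\ref{c:strange} at $x=0$ produces precisely $\gr\KK$, and that applying the result to the dual pair (which is legitimate by the biduality and the maximal monotonicity of $A^{-1}$ and $B^{-\ovee}$) handles $\gr\ZZ$ symmetrically, avoiding any need to redo the computation.
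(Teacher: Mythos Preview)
Your proof is correct and follows essentially the same approach as the paper: both combine Proposition~\ref{p:ZZKK}\ref{p:ZZKKi} with Corollary~\ref{c:strange} at $x=0$ to get convexity of $\gr\KK$, and then obtain $Z$ and $K$ as linear images. The only minor difference is that for $\gr\ZZ$ the paper invokes Proposition~\ref{p:ZZKK}\ref{p:ZZKKiv} (i.e., $\ZZ=\KK^{-1}$, so $\gr\ZZ$ is the image of $\gr\KK$ under the linear swap map) rather than re-running Corollary~\ref{c:strange} on the dual pair; both routes are valid and equally short.
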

\begin{proof}
Combining Proposition~\ref{p:ZZKK}\ref{p:ZZKKi} and 
Corollary~\ref{c:strange} (with $x=0$), we obtain
the convexity of $\gr\KK$.
Hence $\gr\ZZ$ is convex by Proposition~\ref{p:ZZKK}\ref{p:ZZKKiv}. 
It thus follows that $Z$ and $K$
are convex
as images of convex sets under linear transformations.
\end{proof}

\begin{remark}
\label{r:parallel}
Since 
$Z = (A^{-1}\Box B^{-1})(0)$ and
$K = (A\Box B^\ovee)(0)$,
the convexity of $Z$ and $K$ also follows from 
Corollary~\ref{c:passty}. 
\end{remark}

\begin{remark}[connection to Eckstein and Svaiter's ``extended
solution set'']
In \cite[Section~2.1]{EckSvai08}, Eckstein and Svaiter defined in 2008
the \emph{extended solution set} (for the primal problem) by
\begin{equation}
S_e(A,B) := \menge{(z,w)\in X\times X}{w\in Bz,\,-w\in Az}.
\end{equation}
It is clear that $\gr\ZZ^{-1}=\gr\KK = S_e(B,A) = -S_e(A,B)$.
Unaware of Passty's work,
they proved in \cite[Lemma~1 and Lemma~2]{EckSvai08}
(in the present notation) that $Z = \ran\ZZ$, and that $\gr\ZZ$
is closed and convex. Their proof is very elegant and completely different
from the above Passty-like proof.
In their 2009 follow-up paper \cite{EckSvai09},
Eckstein and Svaiter generalize the notion of the
extended solution set to three or more operators; their
corresponding proof of convexity in
\cite[Proposition~2.2]{EckSvai09} is more direct and along Passty's lines.
\end{remark}

\begin{remark}[convexity of $Z$ and $K$]
If $Z$ is nonempty and a constraint qualification holds,
then $A+B$ is maximally monotone (see, e.g.,
\cite[Section~24.1]{BC2011}) and therefore
$Z = \zer(A+B)$ is convex.
It is somewhat surprising that $Z$ is always convex even without the maximal
monotonicity of $A+B$.
\end{remark}

One may inquire whether or not $Z$ is also closed,
which is another standard property of zeros of maximally monotone
operators. The next example illustrates that $Z$ may fail to be
closed.

\begin{example}[$Z$ need not be closed!]
Suppose that $X=\ell_2$, the real Hilbert space of
square-summable sequences.
In \cite[Example~3.17]{BWY10},
the authors provide a monotone discontinuous linear at most
single-valued operator $S$ on $X$ such that $S$ is maximally monotone
and its adjoint $S^*$ is a maximally monotone
single-valued extension of $-S$.
Hence $\dom S$ is not closed. 
Now assume that $A=S$ and $B=S^*$.
Then $A+B$ is operator that is zero on the dense proper subspace
$Z=\dom(A+B)=\dom S$ of $X$. Thus $Z$ fails to be closed.
Furthermore, in the language of Passty's parallel sums
(see Remark~\ref{r:parallel}), this also illustrates that
the parallel sum need not map a point to a closed set.
\end{example}

\begin{remark}\label{counterexample} We do not know whether or not
such counterexamples can reside in finite-dimensional Hilbert spaces
when $\dom A \cap \dom B \neq \varnothing$. On the one hand, in
view of the forthcoming 
Corollary~\ref{c:apes}\ref{c:apesi-}, any counterexample
must feature at least one operator that is not paramonotone, which
means that the operators \textit{cannot} be simultaneously
subdifferential operators of functions in $\Gamma$. On the other
hand, one has to avoid the situation when $A+B$ is maximally monotone,
which happens when  $\reli\dom A \cap\reli\dom
B \neq \varnothing$. This means that neither is one of the operators
allowed to have full domain, nor can they simultaneously have
relatively open domains, which excludes the situation when both
operators are maximally monotone linear relations (i.e., maximally
monotone operators with graphs that are linear subspaces, 
see \cite{BWY09}).
\end{remark}

\begin{remark}
We note that $\KK$ and $\ZZ$ are in general \emph{not}
maximally monotone. Indeed if $\ZZ$, say, is maximally monotone,
then Corollary~\ref{c:convex}
and \cite[Theorem~4.2]{BWY09} imply that $\gr\ZZ$ is actually 
\emph{affine} (i.e., a translate of a subspace) and
so are $Z$ and $K$ (as range and domain of $\ZZ$).
However, the set $Z$ of Example~\ref{ex:cf} need not be an
affine subspace (e.g., when $U$, $V$ and $Z$ coincide with
the closed unit ball in $X$).
\end{remark}

\section{Reflected Resolvents and Splitting Operators}

\label{s:splitop} 

We start with some useful identities involving
resolvents and reflected resolvents (recall \eqref{e:alden}). 

\begin{proposition}
\label{p:nocars2}
Let $C\colon X\To X$ be maximally monotone.
Then the following hold.
\begin{enumerate}
\item
\label{p:nocars2i-}
$R_{C^{-1}} = -R_C$.
\item
\label{p:nocars2i}
$J_{C^\ovee} = J_C^\ovee$.
\item
\label{p:nocars2ii}
$R_{C^{-\ovee}} = \Id-2J_C^\ovee$.
\end{enumerate}
\end{proposition}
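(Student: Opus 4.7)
The plan is to prove the three identities in the order stated, exploiting two standard facts that have already been recorded in the excerpt: the inverse resolvent identity \eqref{e:iri} and the commutation rule \eqref{e:commute}. Each item will be a short algebraic manipulation rather than any deep fact, so the main challenge is bookkeeping of the $\ovee$-operation.

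For part \ref{p:nocars2i-}, I would simply invoke \eqref{e:iri} to write $J_{C^{-1}} = \Id - J_C$, and then unpack the definition of $R$:
\begin{equation*}
R_{C^{-1}} = 2J_{C^{-1}} - \Id = 2(\Id - J_C) - \Id = \Id - 2J_C = -(2J_C - \Id) = -R_C.
\end{equation*}

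For part \ref{p:nocars2i}, I would verify the identity by a direct Minty-type calculation. Fix $x \in X$ and set $y = J_{C^\ovee}(x)$. By definition of the resolvent this is equivalent to $x - y \in C^\ovee(y) = -C(-y)$, i.e., $-x \in -y + C(-y) = (\Id + C)(-y)$. Applying $J_C$ gives $-y = J_C(-x)$, hence $y = -J_C(-x) = J_C^\ovee(x)$. (Maximal monotonicity of $C^\ovee$, which follows from that of $C$, ensures the resolvent is single-valued and defined on all of $X$, so the chain of equivalences is legitimate.) This gives $J_{C^\ovee} = J_C^\ovee$.

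For part \ref{p:nocars2ii}, the cleanest route is to combine \ref{p:nocars2i-} and \ref{p:nocars2i} together with \eqref{e:commute}. Since $C^\ovee$ is itself maximally monotone, applying \ref{p:nocars2i-} to $C^\ovee$ yields $R_{(C^\ovee)^{-1}} = -R_{C^\ovee}$, and by \eqref{e:commute} the left-hand side is exactly $R_{C^{-\ovee}}$. On the other hand, by \ref{p:nocars2i},
\begin{equation*}
R_{C^\ovee} = 2J_{C^\ovee} - \Id = 2J_C^\ovee - \Id.
\end{equation*}
Therefore $R_{C^{-\ovee}} = -R_{C^\ovee} = \Id - 2J_C^\ovee$, as claimed.

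The only genuine subtlety I anticipate is making sure in \ref{p:nocars2i} that the biconditional chain is valid, which needs the maximal monotonicity of $C^\ovee$; once that is noted, everything else is purely symbolic.
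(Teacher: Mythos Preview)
Your proof is correct and essentially matches the paper's. Part \ref{p:nocars2i-} is identical; for \ref{p:nocars2i} the paper uses a purely operator-algebraic computation $(\Id + (-\Id)\circ C\circ(-\Id))^{-1} = (-\Id)\circ(\Id+C)^{-1}\circ(-\Id)$ rather than your pointwise Minty argument, and for \ref{p:nocars2ii} it applies \eqref{e:iri} directly to $C^\ovee$ rather than routing through \ref{p:nocars2i-}, but these are cosmetic differences.
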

\begin{proof}
\ref{p:nocars2i-}:
By \eqref{e:iri}, we have
$R_{C^{-1}} = 2J_{C^{-1}}-\Id = 2(\Id - J_C) -\Id = \Id - 2J_C
= -(2J_C-\Id) = -R_C$.

\ref{p:nocars2i}:
Indeed,
\begin{subequations}
\begin{align}
J_{C^\ovee} &= \big(\Id+(-\Id)\circ C\circ(-\Id)\big)^{-1}\\
&=  \big((-\Id)\circ(\Id+C)\circ(-\Id)\big)^{-1}\\
&= (-\Id)^{-1}\circ(\Id+C)^{-1}\circ(-\Id)^{-1}\\
&= (-\Id)\circ J_C \circ (-\Id)\\
&= J_C^\ovee.
\end{align}
\end{subequations}
\ref{p:nocars2ii}:
Using \eqref{e:iri} and \ref{p:nocars2i}, we have that 
$R_{C^{-\ovee}}=2J_{C^{-\ovee}}-\Id =
2(\Id-J_{C^\ovee})-\Id = \Id-2J_C^\ovee$.
\end{proof}

\begin{corollary}[Peaceman-Rachford operator is self-dual]
\label{c:PR}
{\rm (See \textbf{Eckstein}'s \cite[Lemma~3.5 on page~125]{EckThesis}.) }
The Peaceman-Rachford operators for $(A,B)$ and $(A,B)^* =
(A^{-1},B^{-\ovee})$ coincide,
i.e., we have \emph{self-duality}
in the sense that
\begin{equation}
\label{e:PRa}
R_BR_A = R_{B^{-\ovee}}R_{A^{-1}}.
\end{equation}
Consequently,
\begin{equation}
\label{e:PRb}
(\forall \lambda\in[0,1]) \quad
(1-\lambda)\Id + \lambda R_BR_A =
(1-\lambda)\Id + \lambda R_{B^{-\ovee}}R_{A^{-1}}.
\end{equation}
\end{corollary}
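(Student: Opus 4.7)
The plan is to reduce \eqref{e:PRa} to the three identities recorded in Proposition~\ref{p:nocars2} and then observe that \eqref{e:PRb} is an immediate consequence. The strategy is to rewrite $R_{A^{-1}}$ and $R_{B^{-\ovee}}$ so that all sign-flips cancel.

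First I would apply Proposition~\ref{p:nocars2}\ref{p:nocars2i-} with $C=A$ to get $R_{A^{-1}}=-R_A$. Next I would handle $R_{B^{-\ovee}}$ by noticing that $B^{-\ovee}=(B^\ovee)^{-1}$ (this is exactly the commutation \eqref{e:commute}), so that another application of Proposition~\ref{p:nocars2}\ref{p:nocars2i-}, this time with $C=B^\ovee$, yields $R_{B^{-\ovee}}=-R_{B^\ovee}$. It then remains to show that $R_{B^\ovee}=(-\Id)\circ R_B\circ(-\Id)$, which follows directly from Proposition~\ref{p:nocars2}\ref{p:nocars2i}: since $J_{B^\ovee}=(-\Id)\circ J_B\circ(-\Id)$, we have
\begin{equation}
R_{B^\ovee}=2J_{B^\ovee}-\Id=(-\Id)\circ(2J_B-\Id)\circ(-\Id)=(-\Id)\circ R_B\circ(-\Id).
\end{equation}
Consequently $R_{B^{-\ovee}}=-(-\Id)\circ R_B\circ(-\Id)=R_B\circ(-\Id)$, and therefore
\begin{equation}
R_{B^{-\ovee}}R_{A^{-1}}=R_B\circ(-\Id)\circ(-R_A)=R_B R_A,
\end{equation}
which is \eqref{e:PRa}.

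For \eqref{e:PRb}, once \eqref{e:PRa} is established, convex-combining both sides with $\Id$ using any $\lambda\in[0,1]$ gives the claimed equality without further work.

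I do not anticipate a real obstacle here: the only thing to be careful about is keeping the two $(-\Id)$ factors in $B^\ovee$ bookkept correctly, in particular making sure that one reads $R_{B^{-\ovee}}$ as $R_{(B^\ovee)^{-1}}$ rather than as $R_{(B^{-1})^\ovee}$ before applying \ref{p:nocars2i-}; the biduality $(B^{-1})^\ovee=(B^\ovee)^{-1}$ from \eqref{e:commute} shows that either interpretation leads to the same chain of simplifications.
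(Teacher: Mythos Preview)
Your proof is correct and essentially the same as the paper's. The only cosmetic difference is that the paper invokes Proposition~\ref{p:nocars2}\ref{p:nocars2i-} and \ref{p:nocars2iii} (writing $R_{B^{-\ovee}}=\Id-2J_B^\ovee$ directly), whereas you use \ref{p:nocars2i-} twice together with \ref{p:nocars2i} to reach the equivalent expression $R_{B^{-\ovee}}=R_B\circ(-\Id)$; since \ref{p:nocars2iii} is itself derived from \ref{p:nocars2i} in the paper, the two computations are interchangeable.
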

\begin{proof}
Using Proposition~\ref{p:nocars2}\ref{p:nocars2i-}\&\ref{p:nocars2ii},
we obtain \eqref{e:PRa}
$R_{B^{-\ovee}}R_{A^{-1}} = (\Id-2J_B^\ovee)(-R_A) =
-R_A+2J_BR_A =(2J_B-\Id)R_A = R_BR_A$.
Now \eqref{e:PRb} follows immediately from \eqref{e:PRa}.
\end{proof}

\begin{corollary}[Douglas-Rachford operator is self-dual]
\label{c:DR}
{\rm (See \textbf{Eckstein}'s \cite[Lemma~3.6 on page~133]{EckThesis}.) }
For the \emph{Douglas-Rachford operator}
\begin{equation}
\label{e:DR}
T_{(A,B)} := \thalb\Id + \thalb  R_BR_A
\end{equation}
we have
\begin{equation}
T_{(A,B)} = J_BR_A+\Id-J_A = T_{(A^{-1},B^{-\ovee})}.
\end{equation}
\end{corollary}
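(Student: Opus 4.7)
The plan is to handle the two equalities in sequence, both being direct consequences of results already established. Each is essentially a short algebraic manipulation, so I do not expect any serious obstacle.

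First, I would verify the identity $T_{(A,B)} = J_B R_A + \Id - J_A$ by simply expanding the definition \eqref{e:DR}. Using $R_B = 2J_B - \Id$ and $R_A = 2J_A - \Id$, one computes
\begin{equation}
\tfrac{1}{2}\Id + \tfrac{1}{2}R_B R_A
= \tfrac{1}{2}\Id + \tfrac{1}{2}(2J_B - \Id) R_A
= \tfrac{1}{2}\Id + J_B R_A - \tfrac{1}{2}R_A,
\end{equation}
and then substituting $\tfrac{1}{2}R_A = J_A - \tfrac{1}{2}\Id$ collapses the two $\tfrac{1}{2}\Id$ terms into $\Id$ and yields the desired expression. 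This is purely routine.

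Second, for the self-duality $T_{(A,B)} = T_{(A^{-1},B^{-\ovee})}$, I would apply Corollary~\ref{c:PR} (self-duality of the Peaceman--Rachford operator) in the special case $\lambda = \tfrac{1}{2}$. By \eqref{e:PRb},
\begin{equation}
\tfrac{1}{2}\Id + \tfrac{1}{2}R_B R_A = \tfrac{1}{2}\Id + \tfrac{1}{2}R_{B^{-\ovee}} R_{A^{-1}},
\end{equation}
and the left-hand side is $T_{(A,B)}$ by definition while the right-hand side is $T_{(A^{-1},B^{-\ovee})}$ by applying \eqref{e:DR} to the dual pair $(A,B)^* = (A^{-1},B^{-\ovee})$. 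This finishes the proof.

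Since both equalities reduce to invoking an identity already in hand (the expansion $R_C = 2J_C - \Id$ for the first, and Corollary~\ref{c:PR} for the second), there is no genuine difficulty to overcome. The only care needed is bookkeeping of the halves in the algebraic expansion, and being explicit that the self-duality of Peaceman--Rachford transfers directly to Douglas--Rachford because $T_{(\cdot,\cdot)}$ is just the specific convex combination $\lambda = \tfrac{1}{2}$ of the identity and the Peaceman--Rachford operator.
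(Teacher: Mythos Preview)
Your proposal is correct and follows exactly the same approach as the paper: the first equality by direct expansion using $R_C = 2J_C - \Id$, and the self-duality by specializing \eqref{e:PRb} to $\lambda = \tfrac{1}{2}$. The paper's proof is simply a more terse statement of precisely these two steps.
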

\begin{proof}
The left equality is a simple expansion while
self-duality is \eqref{e:PRb} with $\lambda=\thalb$.
\end{proof}

\begin{remark}[backward-backward operator is not self-dual]
In contrast to Corollary~\ref{c:PR},
the backward-backward operator is not self-dual:
indeed,
using \eqref{e:iri} and Proposition~\ref{p:nocars2}\ref{p:nocars2ii},
we deduce that
\begin{equation}
J_{B^{-\ovee}}J_{A^{-1}} = (\Id-J_B^\ovee)(\Id-J_A)=
\Id-J_A + J_B(J_A-\Id) = (J_B-\Id)(J_A-\Id).
\end{equation}
Thus if $A\equiv 0$ and $\dom B$ is not a singleton 
(equivalently, $J_A =\Id$ and $\ran J_B$ is not a singleton), 
then
$J_{B^{-\ovee}}J_{A^{-1}} \equiv (J_B-\Id)0 \equiv J_B0 \neq J_B=J_BJ_A$.
\end{remark}

For the rest of this paper, we set
\boxedeqn{
\label{e:T}
T = \thalb\Id + \thalb R_BR_A = J_BR_A+\Id-J_A.
}

Clearly,
\begin{equation}
\Fix T = \Fix R_BR_A.
\end{equation}

\begin{theorem}
\label{t:mace2}
The mapping
\begin{equation}
\label{e:mace2}
\Psi\colon\gr\KK\to\Fix T\colon (z,k)\mapsto z+k
\end{equation}
is a well defined bijection that is continuous in both directions,
with $\Psi^{-1}\colon x\mapsto (J_Ax,x-J_Ax)$.
\end{theorem}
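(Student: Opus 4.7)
The plan is to show that the Minty parametrization for $A$ (Fact~\ref{f:Mintypar}) is exactly the bridge between $\gr\KK$ and $\Fix T$ under the map $\Psi$. The key observation is that for $(z,k)\in\gr\KK$, the condition $k\in Az$ forces $J_A(z+k)=z$, while the condition $-k\in Bz$ forces $J_B(z-k)=z$, and these two identities are precisely what is needed for $z+k$ to be a fixed point of $T$.

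First, I would verify that $\Psi$ is well-defined with range in $\Fix T$. Take $(z,k)\in\gr\KK$, so $k\in Az$ and $-k\in Bz$, and set $x:=z+k$. From $k\in Az$ and Fact~\ref{f:Mintypar} applied to $A$, $J_Ax=z$, whence $R_Ax=2z-(z+k)=z-k$. From $-k\in Bz$ and Fact~\ref{f:Mintypar} applied to $B$, $J_B(z-k)=z$. Substituting into \eqref{e:T},
\begin{equation}
Tx = J_BR_Ax+x-J_Ax = z + (z+k) - z = z+k = x,
\end{equation}
so $x\in\Fix T$.

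Next, I would show that the candidate inverse $x\mapsto(J_Ax,x-J_Ax)$ sends $\Fix T$ into $\gr\KK$. Let $x\in\Fix T$ and put $z:=J_Ax$, $k:=x-J_Ax$. By Fact~\ref{f:Mintypar} applied to $A$, $k\in Az$. Since $Tx=x$, \eqref{e:T} gives $J_BR_Ax=J_Ax=z$. But $R_Ax=2z-x=z-k$, so $J_B(z-k)=z$; Fact~\ref{f:Mintypar} applied to $B$ then yields $(z-k)-z=-k\in Bz$, i.e., $k\in-Bz$. Hence $k\in Az\cap(-Bz)=K_z$, so $(z,k)\in\gr\KK$.

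The two maps invert each other: $\Psi^{-1}(\Psi(z,k))=\Psi^{-1}(z+k)=(J_A(z+k),(z+k)-J_A(z+k))=(z,k)$ by the computation above, and $\Psi(\Psi^{-1}(x))=J_Ax+(x-J_Ax)=x$. Finally, for continuity, $\Psi$ is the restriction to $\gr\KK$ of the continuous linear map $(z,k)\mapsto z+k$, and $\Psi^{-1}$ is continuous because $J_A$ is (firmly) nonexpansive. There is no serious obstacle here; the only point requiring care is a clean bookkeeping of the two applications of the Minty parametrization, one for $A$ and one for $B$.
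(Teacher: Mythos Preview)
Your proof is correct and follows essentially the same route as the paper's: both verify well-definedness by translating $k\in Az$ and $-k\in Bz$ into the resolvent identities $J_A(z+k)=z$ and $J_B(z-k)=z$, then check that $x\mapsto(J_Ax,x-J_Ax)$ inverts $\Psi$ via the same bookkeeping. The only cosmetic difference is that the paper invokes the resolvent definition directly for the equivalences and appeals to Fact~\ref{f:Mintypar} only at the end for injectivity, whereas you cite Fact~\ref{f:Mintypar} throughout; the underlying computations are identical.
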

\begin{proof}
Take $(z,k)\in\gr\KK$.
Then $k\in K_z = (Az)\cap (-Bz)$.
Now $k\in Az$
$\Leftrightarrow$
$z+k\in(\Id+A)z$
$\Leftrightarrow$
$z = J_A(z+k)$,
and $k\in (-Bz)$
$\Leftrightarrow$
$-k\in Bz$
$\Leftrightarrow$
$z-k\in (\Id+B)z$
$\Leftrightarrow$
$z=J_B(z-k)$.
Set
$x := z+k$.
Then $J_Ax=J_A(z+k)=z$ and
hence $R_Ax = 2J_Ax-x = 2z-(z+k) = z-k$.
Thus,
\begin{equation}
Tx = x-J_Ax+J_BR_Ax =
z+k-z+J_B(z-k) = k+z = x,
\end{equation}
i.e., $x\in\Fix T$.
It follows that $\Psi$ is \emph{well defined}.

Let us now show that $\Psi$ is \emph{surjective}.
To this end, take $x\in \Fix T$. 
Set $z:=J_Ax$ as well as $k:=(\Id-J_A)x = x - z$.
Clearly,
\begin{equation}
x=z+k.
\end{equation}
Now $z=J_Ax$
$\Leftrightarrow$
$x\in(\Id+A)z = z+Az$
$\Leftrightarrow$
$k=x-z\in Az$.
Thus,
\begin{equation}
k\in Az.
\end{equation}
We also have
$R_Ax=2J_Ax-x=2z-(z+k)=z-k$;
hence,
$x=Tx=x-J_Ax+J_BR_Ax$
$\Leftrightarrow$
$J_Ax=J_BR_Ax$
$\Leftrightarrow$
$z=J_B(z-k)$
$\Leftrightarrow$
$z-k\in(\Id+B)z =z+Bz$
$\Leftrightarrow$
\begin{equation}
k\in -Bz.
\end{equation}
Altogether, $k\in (Az)\cap(-Bz) = K_z$
$\Leftrightarrow$
$(z,k)\in\gr \KK$. Hence $\Psi$ is surjective.

In view of Fact~\ref{f:Mintypar} and since $\gr\KK\subseteq\gr A$,
it is clear that $\Psi$ is \emph{injective} with the announced inverse.
\end{proof}

The following result is a straight-forward consequence
of Theorem~\ref{t:mace2}.

\begin{corollary}
\label{c:summerlandrox}
We have
\begin{equation}
\label{e:summerlandrox1}
(\forall z\in Z)
\quad
K_z = J_{A^{-1}}\big(J_A^{-1}z\cap \Fix T\big)
\end{equation}
and
\begin{equation}
\label{e:summerlandrox2}
(\forall k\in K)
\quad
Z_k = J_A(J_{A^{-1}}^{-1}k\cap \Fix T\big).
\end{equation}
\end{corollary}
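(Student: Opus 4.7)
The plan is to read off both identities directly from Theorem~\ref{t:mace2}, which supplies the bijection $\Psi\colon\gr\KK\to\Fix T$, $(z,k)\mapsto z+k$, with inverse $\Psi^{-1}\colon x\mapsto (J_Ax,\,x-J_Ax)$. The only auxiliary fact needed is the inverse resolvent identity \eqref{e:iri}: for every $x\in X$, $x-J_Ax = J_{A^{-1}}x$, so the second coordinate of $\Psi^{-1}(x)$ can be rewritten as $J_{A^{-1}}x$. In other words, $\Psi^{-1}(x) = (J_Ax,\,J_{A^{-1}}x)$.

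To establish \eqref{e:summerlandrox1}, fix $z\in Z$ and compute the fiber of $\KK$ at $z$. By Theorem~\ref{t:mace2}, $k\in K_z$ is equivalent to $(z,k)\in\gr\KK$, which in turn is equivalent to the existence of a (unique) $x\in\Fix T$ with $\Psi^{-1}(x) = (z,k)$. Reading off the two coordinates, this is equivalent to requiring $J_Ax = z$, i.e.\ $x\in J_A^{-1}z$, together with $k = J_{A^{-1}}x$. Varying over all such $x$ yields exactly $K_z = J_{A^{-1}}\big(J_A^{-1}z\cap\Fix T\big)$.

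The proof of \eqref{e:summerlandrox2} is entirely symmetric: fix $k\in K$ and compute the fiber $Z_k = \{z\in X : (z,k)\in\gr\KK\}$. The same equivalence now says that $z\in Z_k$ iff there exists $x\in\Fix T$ with $J_{A^{-1}}x = k$ (i.e., $x\in J_{A^{-1}}^{-1}k$) and $z = J_Ax$; sweeping over all admissible $x$ then gives $Z_k = J_A\big(J_{A^{-1}}^{-1}k\cap\Fix T\big)$.

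I do not expect any genuine obstacle: once \eqref{e:iri} is invoked to reinterpret the second coordinate of $\Psi^{-1}$, the corollary is pure bookkeeping on top of Theorem~\ref{t:mace2}. The only point requiring mild care is to treat $J_A^{-1}z$ and $J_{A^{-1}}^{-1}k$ as set-theoretic preimages of the (single-valued) resolvents, and to ensure that the intersection with $\Fix T$ correctly encodes the constraint that $x$ lie in the image of $\Psi$.
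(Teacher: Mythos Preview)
Your proposal is correct and matches the paper's intent exactly: the paper gives no explicit proof, stating only that the result is a ``straight-forward consequence of Theorem~\ref{t:mace2}'', and what you have written is precisely that straightforward argument, with the inverse resolvent identity~\eqref{e:iri} invoked to recognize $x-J_Ax = J_{A^{-1}}x$. One minor nit: the equivalence ``$k\in K_z \Leftrightarrow (z,k)\in\gr\KK$'' is just the definition of $\KK$, not a consequence of Theorem~\ref{t:mace2}; the theorem enters only at the next step.
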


\begin{corollary}[Combettes]
\label{c:Combettes}
{\rm (see \cite[Lemma~2.6(iii)]{Comb04})}
$J_A(\Fix T) = Z$.
\end{corollary}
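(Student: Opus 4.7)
The plan is to deduce this directly from the bijection $\Psi$ of Theorem~\ref{t:mace2}, using the fact that $\dom \KK = Z$ (Proposition~\ref{p:ZZKK}\ref{p:ZZKKi+}). The key observation is that the first-coordinate projection $\pi_1(z,k) = z$ applied to $\gr\KK$ yields precisely $\dom\KK = Z$, and Theorem~\ref{t:mace2} tells us that $\Psi^{-1} = (J_A, \Id - J_A)$, so composing $\pi_1$ with $\Psi^{-1}$ gives exactly $J_A$ restricted to $\Fix T$.

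More concretely, I would argue the two inclusions separately. For $J_A(\Fix T) \subseteq Z$: take any $x \in \Fix T$ and set $z := J_A x$, $k := x - J_A x$. By Theorem~\ref{t:mace2}, $\Psi^{-1}(x) = (z,k) \in \gr\KK$, hence $z \in \dom\KK = Z$. For $Z \subseteq J_A(\Fix T)$: take $z \in Z = \dom\KK$, so there exists $k \in X$ with $(z,k) \in \gr\KK$. Setting $x := \Psi(z,k) = z+k \in \Fix T$, we see from the explicit formula for $\Psi^{-1}$ that $J_A x = z$, so $z \in J_A(\Fix T)$.

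There is no real obstacle here; the work has already been done in establishing the bijection $\Psi$ and identifying $\dom \KK$ with $Z$. This proof is essentially a one-line corollary: $J_A(\Fix T) = \pi_1(\Psi^{-1}(\Fix T)) = \pi_1(\gr \KK) = \dom\KK = Z$.
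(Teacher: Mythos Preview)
Your proof is correct and is essentially identical to the paper's own argument: the paper defines the first-coordinate projection $Q$ (your $\pi_1$) and writes the single chain $J_A(\Fix T) = Q\ran\Psi^{-1} = Q\dom\Psi = Q(\gr\KK) = \dom\KK = Z$, invoking exactly Theorem~\ref{t:mace2} and Proposition~\ref{p:ZZKK}\ref{p:ZZKKi+}. Your additional spelled-out two-inclusion version is just an expanded form of the same reasoning.
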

\begin{proof}
Set $Q\colon X\times X\to X\colon (x_1,x_2)\mapsto x_1$.
By Theorem~\ref{t:mace2} and Proposition~\ref{p:ZZKK}\ref{p:ZZKKi+},
$J_A(\Fix T) = Q\ran\Psi^{-1} = Q\dom \Psi=Q(\gr\KK) = \dom\KK
=Z$.
\end{proof}

\begin{example}
{\rm (see also \cite[Fact~A1]{BCL02})}
Suppose that $A=N_U$ and $B=N_V$,
where $U$ and $V$ are closed convex subsets of $X$ such that
$U\cap V\neq\varnothing$.
Then $P_U(\Fix T) = U\cap V$.
\end{example}

\begin{corollary}
$(\Id-J_A)(\Fix T) = K$.
\end{corollary}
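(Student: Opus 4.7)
The plan is to mimic the proof of Corollary~\ref{c:Combettes}, which obtained $J_A(\Fix T) = Z$ by observing that projecting $\gr\KK$ onto its first coordinate yields $\dom\KK = Z$. Here, since $K = \ran\KK$ by Proposition~\ref{p:ZZKK}\ref{p:ZZKKi+}, the natural move is to project $\gr\KK$ onto its second coordinate and transport the identity through the bijection $\Psi$ of Theorem~\ref{t:mace2}.

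More concretely, I would introduce the projection $P\colon X\times X\to X\colon (x_1,x_2)\mapsto x_2$, and compute $P(\gr\KK) = \ran\KK = K$ using Proposition~\ref{p:ZZKK}\ref{p:ZZKKi+}. By Theorem~\ref{t:mace2}, the map $\Psi^{-1}\colon \Fix T \to \gr\KK$ sends $x$ to $(J_Ax, x-J_Ax) = (J_Ax, (\Id-J_A)x)$, and is a bijection. Therefore
\begin{equation}
(\Id-J_A)(\Fix T) = P\circ\Psi^{-1}(\Fix T) = P(\gr\KK) = \ran\KK = K,
\end{equation}
which is the claim.

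There is no real obstacle: all the heavy lifting has been done in Theorem~\ref{t:mace2}, and this corollary is simply the ``second-coordinate'' companion to Corollary~\ref{c:Combettes}. One could equivalently argue by direct inclusion — if $x\in\Fix T$, then Theorem~\ref{t:mace2} gives $(J_Ax,(\Id-J_A)x)\in\gr\KK$, so $(\Id-J_A)x\in K_{J_Ax}\subseteq K$; and conversely, given $k\in K$, choose $z\in Z$ with $k\in K_z$, set $x := z+k\in\Fix T$ (by Theorem~\ref{t:mace2}), and observe that $J_Ax = z$ so $(\Id-J_A)x = k$ — but the one-line argument above via $\Psi^{-1}$ is cleaner and matches the style already established.
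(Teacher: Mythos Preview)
Your proof is correct and matches exactly the first of the two approaches the paper's own proof suggests (``argue similarly to the proof of Corollary~\ref{c:Combettes}''). The paper also notes an alternative route: apply Corollary~\ref{c:Combettes} to the dual pair $(A^{-1},B^{-\ovee})$ to obtain $J_{A^{-1}}(\Fix T_{(A^{-1},B^{-\ovee})}) = K$, and then invoke the self-duality $T_{(A,B)}=T_{(A^{-1},B^{-\ovee})}$ from Corollary~\ref{c:DR} together with the inverse resolvent identity $J_{A^{-1}}=\Id-J_A$.
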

\begin{proof}
Either argue similarly to the proof
of Corollary~\ref{c:Combettes}, or
apply Corollary~\ref{c:Combettes} to the dual and recall that
$T$ is self-dual by Corollary~\ref{c:DR}.
\end{proof}

\section{Paramonotonicity}

\label{s:para}

\begin{definition}
A monotone operator $C\colon X\To X$ is
\emph{paramonotone},
if
\begin{equation}
\left.
\begin{matrix}
x^*\in Cx\\
y^*\in Cy\\
\scal{x-y}{x^*-y^*}=0
\end{matrix}
\right\}
\quad\Rightarrow\quad
x^*\in Cy \text{~and~} y^*\in Cx.
\end{equation}
\end{definition}

\begin{remark}
Paramonotonicity has proven to be a very useful
property for finding solution of variational
inequalities by iteration;
see, e.g.,
\cite{Iusem98}, \cite{CIZ}, \cite{BurIus98}, \cite{OY03}, and \cite{HS06}.
Examples of paramonotone operators abound:
indeed, each of the following is paramonotone.
\begin{enumerate}
\item $\partial f$, where $f\in\Gamma$
{\rm \cite[Proposition~2.2]{Iusem98}.}
\item $C\colon X\To X$, where $C$ is strictly monotone.
\item $\RR^{n}\to\RR^{n}\colon x\mapsto Cx+b$,
where
$C\in\RR^{n\times n}$, $b\in\RR^n$,
$C_+ = \thalb C+\thalb C^T$,
$\ker C_+\subseteq\ker C$,
and
$C_+$ is positive semidefinite
{\rm \cite[Proposition~3.1]{Iusem98}.}
\end{enumerate}
For further examples, see \cite{Iusem98}.
When $C$ is a continuous linear monotone operator,
then $C$ is paramonotone
if and only if $C$ is rectangular
(a.k.a.\ 3* monotone); see \cite[Section~4]{BBW06}.
It is straight-forward to check that for $C\colon X\To X$, we have
\begin{align}
\label{e:parainv}
\text{$C$ is paramonotone}
&\Leftrightarrow
\text{$C^{-1}$ is paramonotone}\notag\\
&\Leftrightarrow
\text{$C^{\ovee}$ is paramonotone}\\
&\Leftrightarrow\;
\text{$C^{-\ovee}$ is paramonotone.}\notag
\end{align}
\end{remark}

\begin{theorem}
\label{t:apes}
Suppose that $A$ and $B$ are paramonotone.
Then $(\forall z\in Z)$ $K_z=K$ and
$(\forall k\in K)$ $Z_k = Z$.
\end{theorem}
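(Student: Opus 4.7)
The plan is to exploit Corollary~\ref{c:Passty} together with paramonotonicity in the obvious way, and then obtain the ``dual half'' of the statement by invoking the result just proved on the dual pair.

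First I would fix $z_1,z_2\in Z$ and $k_1\in K_{z_1}$, $k_2\in K_{z_2}$. Unpacking the definition of $K_z$ gives
\begin{equation*}
k_1\in Az_1,\quad k_2\in Az_2,\quad -k_1\in Bz_1,\quad -k_2\in Bz_2.
\end{equation*}
Corollary~\ref{c:Passty} immediately yields $\scal{z_1-z_2}{k_1-k_2}=0$. Since $A$ is paramonotone and $k_1\in Az_1$, $k_2\in Az_2$, paramonotonicity of $A$ forces $k_1\in Az_2$ and $k_2\in Az_1$. The same orthogonality relation, with the sign flipped on the $k$'s, gives $\scal{z_1-z_2}{(-k_1)-(-k_2)}=0$, so the paramonotonicity of $B$ applied to $-k_1\in Bz_1$ and $-k_2\in Bz_2$ delivers $-k_1\in Bz_2$ and $-k_2\in Bz_1$. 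Hence $k_1\in (Az_2)\cap(-Bz_2)=K_{z_2}$ and, symmetrically, $k_2\in K_{z_1}$. Since this holds for arbitrary choices, $K_{z_1}=K_{z_2}$ for every $z_1,z_2\in Z$. Combining this with Proposition~\ref{p:momday}\ref{p:momdayii}, which tells us $K=\bigcup_{z\in Z}K_z$, yields $K_z=K$ for every $z\in Z$.

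For the statement $Z_k=Z$, I would avoid repeating the argument and instead apply what we have just shown to the dual pair $(A,B)^{*}=(A^{-1},B^{-\ovee})$. By \eqref{e:parainv}, both $A^{-1}$ and $B^{-\ovee}$ are paramonotone, so the hypotheses are inherited by the dual pair. By biduality \eqref{e:0823:a}, the primal/dual solution sets of $(A,B)^{*}$ are $K$ and $Z$ respectively, and the ``$K_z$''-set for the dual pair at a point $k\in K$ is precisely $A^{-1}k\cap\bigl(-B^{-\ovee}k\bigr)=Z_k$ (compare \eqref{e:Z_k}). Applying the first half of the theorem to $(A,B)^{*}$ therefore gives $Z_k=Z$ for every $k\in K$.

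The argument is essentially immediate once one notices that Corollary~\ref{c:Passty} provides the zero inner product that paramonotonicity demands; the only point requiring a little care is that paramonotonicity must be used \emph{twice}, once for $A$ on the pairs $(z_i,k_i)$ and once for $B$ on the pairs $(z_i,-k_i)$, so that we recover membership in both $Az_j$ and $-Bz_j$ to conclude $k_i\in K_{z_j}$. The duality step at the end is routine provided one verifies that the ``$K_z$''-object attached to the dual pair coincides with the ``$Z_k$''-object attached to the primal pair, which is exactly the content of \eqref{e:commute} and \eqref{e:Z_k}.
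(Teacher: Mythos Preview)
Your proof is correct and follows essentially the same route as the paper: Corollary~\ref{c:Passty} gives the vanishing inner product, paramonotonicity of $A$ and of $B$ is applied separately to obtain the cross-memberships, Proposition~\ref{p:momday}\ref{p:momdayii} turns $K_{z_1}=K_{z_2}$ into $K_z=K$, and the dual half is obtained by passing to $(A^{-1},B^{-\ovee})$ via \eqref{e:parainv}. Your write-up is in fact a bit more explicit than the paper's (which compresses the two uses of paramonotonicity into a single sentence), but the argument is the same.
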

\begin{proof}
Suppose that $z_1$ and $z_2$ belong to $Z$
and that $z_1\neq z_2$.
Take $k_1\in K_{z_1}=Az_1\cap (-Bz_1)$
and $k_2\in K_{z_2} = Az_2\cap (-Bz_2)$.
By Corollary~\ref{c:Passty},
\begin{equation}
\scal{k_1-k_2}{z_1-z_2}=0.
\end{equation}
Since $A$ and $B$ are paramonotone,
we have $k_2\in Az_1$ and $-k_2\in Bz_1$;
equivalently, $k_2\in K_{z_1}$.
It follows that $K_{z_2}\subseteq K_{z_1}$.
Since the reverse inclusion follows in the same fashion,
we see that $K_{z_1}=K_{z_2}$.
In view of Proposition~\ref{p:momday}\ref{p:momdayii},
$K_{z_1}=K$, which proves the first conclusion.
Since $A$ and $B$ are paramonotone so are
$A^{-1}$ and $B^{-\ovee}$ by \eqref{e:parainv}.
Therefore, the second conclusion follows from what we already
proved (applied to $A^{-1}$ and $B^{-\ovee}$).
\end{proof}

\begin{remark}[recovering \emph{all} primal solutions
from \emph{one} dual solution] \
\label{r:apes}
Suppose that $A$ and $B$ are paramonotone
and we know \emph{one} (arbitrary) dual solution, say $k_0\in K$.
Then
\begin{equation}
Z_{k_0} = A^{-1}k_0 \cap \big(B^{-1}(-k_0)\big)
\end{equation}
recovers the set $Z$ of \emph{all} primal solutions,
by Theorem~\ref{t:apes}.
If $A=\partial f$ and $B=\partial g$,
where $f$ and $g$ belong to $\Gamma$,
then, since $(\partial f)^{-1}=\partial f^*$
and $(\partial g)^{-1}=\partial g^*$,
we obtain a formula well known in \emph{Fenchel duality}, namely,
\begin{equation}
Z = \partial f^*(k_0)\cap \partial g^*(-k_0).
\end{equation}
We shall revisit this setting in more detail in Section~\ref{s:Fenchel}.
%see, e.g., \cite[Proposition~XII.5.4.1 on p.~190]{HULL2}.
In striking contrast, the complete recovery
of all primal solutions from one dual solution is generally impossible
when at least one of the operators is no longer
paramonotone --- see, e.g., Example~\ref{ex:normskew} where one
of the operators is even a normal cone operator.

\end{remark}

\begin{corollary}
\label{c:apes}
Suppose $A$ and $B$ are paramonotone.
Then the following hold.
\begin{enumerate}
\item
\label{c:apesi-}
$Z$ and $K$ are closed.
\item
\label{c:apesi}
$\gr\KK$ and $\gr\ZZ$ are the
``rectangles'' $Z\times K$ and $K\times Z$, respectively.
\item
\label{c:apesi+}
$\Fix T = Z+K$.
\item
\label{c:apesii}
$(Z-Z)\perp(K-K)$.
\item
\label{c:apesiii}
$\cspan(K-K)=X$ $\Rightarrow$ $Z$ is a singleton.
\item
\label{c:apesiv}
$\cspan(Z-Z)=X$ $\Rightarrow$ $K$ is a singleton.
\end{enumerate}
\end{corollary}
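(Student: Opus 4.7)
My strategy is to observe that Theorem~\ref{t:apes} is the engine driving all six items, and that once the rectangle structure \ref{c:apesi} is established, everything else falls out by straightforward formal manipulations on top of machinery already developed in Section~\ref{s:solmaps} and Section~\ref{s:splitop}.

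First I would prove \ref{c:apesi}. By Theorem~\ref{t:apes}, $K_z = K$ for every $z\in Z$, and Proposition~\ref{p:ZZKK}\ref{p:ZZKKi+} gives $\dom \KK = Z$; concatenating these two facts immediately yields $\gr\KK = Z\times K$. The rectangle description $\gr\ZZ = K\times Z$ then follows either from the second conclusion of Theorem~\ref{t:apes} applied symmetrically, or simply from $\ZZ = \KK^{-1}$ (Proposition~\ref{p:ZZKK}\ref{p:ZZKKiv}). Next, \ref{c:apesi-} follows by combining \ref{c:apesi} with the closedness of $\gr\KK$ (Proposition~\ref{p:ZZKK}\ref{p:ZZKKii}): a closed nonempty rectangle has closed factors, realized as slices $\{z\in X : (z,k_0)\in\gr\KK\} = Z$ for any fixed $k_0\in K$, and symmetrically for $K$. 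The empty case is handled by the equivalence $Z=\emp \Leftrightarrow K=\emp$ from Proposition~\ref{p:momday}\ref{p:momdayiii}.

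For \ref{c:apesi+}, I would invoke the bijection $\Psi\colon \gr\KK\to\Fix T$ from Theorem~\ref{t:mace2}, which sends $(z,k)\mapsto z+k$; applying it to the rectangle $\gr\KK = Z\times K$ gives $\Fix T = Z+K$. For \ref{c:apesii}, Corollary~\ref{c:Passty} is tailor-made: for arbitrary $z_1,z_2\in Z$ and $k_1,k_2\in K$, item \ref{c:apesi} says $k_i\in K_{z_i}$, and the corollary immediately delivers $\scal{k_1-k_2}{z_1-z_2} = 0$, i.e., $(Z-Z)\perp(K-K)$.

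Finally, \ref{c:apesiii} and \ref{c:apesiv} are consequences of \ref{c:apesii} by a standard orthogonality argument: since the inner product is continuous, $(Z-Z)\perp(K-K)$ implies $(Z-Z)\perp\cspan(K-K)$, and under the hypothesis $\cspan(K-K)=X$ we get $Z-Z\subseteq X^\perp = \{0\}$, forcing $Z$ to be a singleton; \ref{c:apesiv} follows by switching the roles of $Z$ and $K$, which is legitimate by the symmetry in \ref{c:apesii} (or alternatively by passing to the dual and using that paramonotonicity is preserved under \eqref{e:parainv}). There is no real obstacle here: the nontrivial analytical content is entirely absorbed into Theorem~\ref{t:apes}, and the corollary amounts to assembling known pieces in the right order.
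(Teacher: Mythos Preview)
Your proposal is correct and follows essentially the same approach as the paper. The only minor difference is in \ref{c:apesi-}: you deduce closedness of $Z$ and $K$ by slicing the closed graph $\gr\KK$ (via Proposition~\ref{p:ZZKK}\ref{p:ZZKKii}) after establishing the rectangle structure, whereas the paper appeals directly to Theorem~\ref{t:apes} together with Proposition~\ref{p:momday}\ref{p:momdayi--}, observing that $K=K_z$ and $Z=Z_k$ are closed as individual fibers --- but your slices are precisely these fibers, so the two routes coincide.
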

\begin{proof}
\ref{c:apesi-}:
Combine Theorem~\ref{t:apes} and Proposition~\ref{p:momday}.

\ref{c:apesi}:
Clear from Theorem~\ref{t:apes}.

\ref{c:apesi+}:
Combine \ref{c:apesi} with Theorem~\ref{t:mace2}.

\ref{c:apesii}:
Combine Corollary~\ref{c:Passty} with Theorem~\ref{t:apes}.

\ref{c:apesiii}:
In view of \ref{c:apesii}, we have that 
$0=\scal{Z-Z}{K-K} = \scal{Z-Z}{\cspan(K-K)}
=\scal{Z-Z}{X}$
$\Rightarrow$
$Z-Z=\{0\}$
$\Leftrightarrow$
$Z$ is a singleton.

\ref{c:apesiv}:
This is verified analogously to the proof of \ref{c:apesiii}.
\end{proof}

\begin{corollary}
\label{c:2GB}
Suppose that $A$ and $B$ are paramonotone.
Then $\Fix T = Z+K$,
$Z = J_A(Z+K)$ and
$K = J_{A^{-1}}(Z+K) = (\Id-J_A)(Z+K)$.
\end{corollary}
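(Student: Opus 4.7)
The plan is to assemble this corollary directly from three earlier results together with the inverse resolvent identity; no new computation is really needed. First, by Corollary~\ref{c:apes}\ref{c:apesi+}, the paramonotonicity of $A$ and $B$ yields $\Fix T = Z + K$, which settles the first identity at once and reduces the remaining two assertions to describing $J_A(\Fix T)$ and $(\Id - J_A)(\Fix T)$.

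Next, I would invoke Corollary~\ref{c:Combettes}, which gives $J_A(\Fix T) = Z$ without any paramonotonicity hypothesis. Substituting $\Fix T = Z+K$ on the left then produces $Z = J_A(Z+K)$. For the final identity, the (unnamed) corollary immediately following Corollary~\ref{c:Combettes} asserts $(\Id - J_A)(\Fix T) = K$, so the same substitution yields $K = (\Id - J_A)(Z+K)$. To convert $\Id - J_A$ into $J_{A^{-1}}$, I would appeal to the inverse resolvent identity \eqref{e:iri}, $J_A + J_{A^{-1}} = \Id$, which gives $\Id - J_A = J_{A^{-1}}$; hence $K = J_{A^{-1}}(Z+K)$ as well.

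Since each step is a direct invocation of a previously established identity, there is no real obstacle. The only point that deserves attention is bookkeeping: the equality $\Fix T = Z+K$ is the one place where paramonotonicity is genuinely used (via Theorem~\ref{t:apes} feeding into Corollary~\ref{c:apes}), whereas the equalities $J_A(\Fix T)=Z$, $(\Id - J_A)(\Fix T)=K$, and $\Id - J_A = J_{A^{-1}}$ hold for arbitrary maximally monotone $A,B$. Thus the corollary reads most naturally as ``substitute the paramonotone formula for $\Fix T$ into the general descriptions of $Z$ and $K$ as images of $\Fix T$ under $J_A$ and $J_{A^{-1}}$, respectively.''
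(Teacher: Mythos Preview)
Your proposal is correct and essentially the same as the paper's proof, which reads ``Combine Corollary~\ref{c:apes}\ref{c:apesi} with Theorem~\ref{t:mace2}.'' The only cosmetic difference is that you cite the downstream consequences (Corollary~\ref{c:apes}\ref{c:apesi+}, Corollary~\ref{c:Combettes}, and the corollary following it) rather than their common sources (the rectangle description $\gr\KK = Z\times K$ and the bijection $\Psi$); mathematically the content is identical.
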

\begin{proof}
Combine Corollary~\ref{c:apes}\ref{c:apesi}
with Theorem~\ref{t:mace2}.
\end{proof}

\begin{remark}[paramonotonicity is critical]
Various results in this section
--- e.g., Theorem~\ref{t:apes}, Remark~\ref{r:apes},
Corollary~\ref{c:apes}\ref{c:apesi}--\ref{c:apesiv}--- fail
if the assumption of paramonotonicity is omitted.
To generate these counterexamples, assume that
$A$ and $B$ are as in Example~\ref{ex:skewskew}
or Example~\ref{ex:normskew}. 
\end{remark}

\section{Projection operators and solution sets}

\label{s:poss}

The following two facts regarding projection operators will
be used in the sequel.

\begin{fact}
\label{f:osum}
{\rm (See, e.g., \cite[Proposition~2.6]{BCL06}.)}
Let $U$ and $V$ be nonempty closed convex subsets of $X$ such that
$U\perp V$. Then $U+V$ is convex and closed, and $P_{U+V}=P_U + P_V$.
\end{fact}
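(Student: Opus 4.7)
The plan is to establish separately the two conclusions: (a) $U+V$ is closed (convexity being immediate from the convexity of $U$ and $V$), and (b) the projection decomposes as $P_{U+V}=P_U+P_V$. Reading $U\perp V$ as $\scal{u}{v}=0$ for all $u\in U$ and $v\in V$ yields the Pythagorean identity $\|u+v\|^2=\|u\|^2+\|v\|^2$, which will be the workhorse in both parts.

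For (a), take a convergent sequence $u_n+v_n\to w$ with $u_n\in U$ and $v_n\in V$. The Pythagorean identity gives $\|u_n\|^2+\|v_n\|^2=\|u_n+v_n\|^2\to\|w\|^2$, so both $(u_n)$ and $(v_n)$ are bounded. Since nonempty closed convex subsets of a Hilbert space are weakly closed, along a common subsequence $u_n\weakly \bar u\in U$ and $v_n\weakly \bar v\in V$. Uniqueness of weak limits then forces $w=\bar u+\bar v\in U+V$.

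For (b), fix $x\in X$ and set $p:=P_U x+P_V x$, which lies in $U+V$. It suffices to verify the variational inequality $\scal{x-p}{(u+v)-p}\leq 0$ for every $u\in U$ and $v\in V$. Splitting $(u+v)-p=(u-P_Ux)+(v-P_Vx)$ and substituting $p=P_Ux+P_Vx$ into the left factor, one obtains
\begin{equation*}
\scal{x-P_Ux}{u-P_Ux}+\scal{x-P_Vx}{v-P_Vx}-\scal{P_Vx}{u-P_Ux}-\scal{P_Ux}{v-P_Vx}.
\end{equation*}
The first two terms are nonpositive by the projection characterizations of $P_Ux$ on $U$ and $P_Vx$ on $V$; the two cross terms vanish because every inner product between a member of $U$ and a member of $V$ is zero, so $\scal{P_Vx}{u}=\scal{P_Vx}{P_Ux}=\scal{P_Ux}{v}=\scal{P_Ux}{P_Vx}=0$.

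The main obstacle is purely bookkeeping in step (b): one must carefully track how the single symbol $p$ splits into two copies of $P_Ux+P_Vx$ when expanding both factors of $\scal{x-p}{(u+v)-p}$, so that the two projection characterizations can be applied cleanly and the orthogonality cancels every remaining cross term. Once the variational inequality is established, $p=P_{U+V}x$ follows from the standard Hilbert-space characterization of projection onto a nonempty closed convex set, which by (a) is available for $U+V$.
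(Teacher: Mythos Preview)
The paper does not prove this statement; it is recorded as a \emph{Fact} with a citation to \cite[Proposition~2.6]{BCL06}. Your argument is correct and self-contained.

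One small remark on part~(a): your weak-compactness route works, but a more elementary variant avoids weak limits entirely. Since $u_n-u_m\in U-U$ and $v_n-v_m\in V-V$ need not lie in $U$ and $V$ themselves, note first that $U\perp V$ forces $\scal{u_n-u_m}{v_n-v_m}=0$ (expand and use $\scal{U}{V}=\{0\}$). Hence
\[
\|u_n-u_m\|^2+\|v_n-v_m\|^2=\|(u_n+v_n)-(u_m+v_m)\|^2,
\]
so $(u_n)$ and $(v_n)$ are Cauchy and converge \emph{strongly} to limits in the closed sets $U$ and $V$. This sidesteps the appeal to weak sequential compactness and weak closedness of convex sets, though your version is perfectly valid in a Hilbert space.
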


\begin{fact}
\label{f:translate}
Let $S$ be a nonempty subset of $X$, and let $y\in X$.
Then $(\forall x\in X)$ $P_{y+S}(x) = y+P_S(x-y)$.
\end{fact}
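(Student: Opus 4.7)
The plan is to unfold the definition of the metric projection and perform a straightforward change of variables. Recall that for any nonempty $T\subseteq X$ and any $x\in X$, one has
\[
P_T(x) = \argmin_{t\in T}\|x-t\|,
\]
interpreted set-valuedly (possibly empty or multi-valued) since no closedness or convexity is assumed on $S$.

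First, I would parametrize an arbitrary element of $y+S$ as $y+s$ with $s\in S$, and exploit the elementary identity
\[
\|x-(y+s)\| = \|(x-y)-s\|.
\]
This shows that the objective $z\mapsto \|x-z\|$ on $z\in y+S$ coincides, under the bijection $z\leftrightarrow s=z-y$ between $y+S$ and $S$, with the objective $s\mapsto \|(x-y)-s\|$ on $s\in S$. Consequently, an element $z\in y+S$ achieves the infimum $d(x, y+S)$ if and only if $s=z-y\in S$ achieves the infimum $d(x-y,S)$.

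Concluding, $P_{y+S}(x) = \{y+s : s\in P_S(x-y)\} = y + P_S(x-y)$, in the Minkowski sense. I do not foresee any real obstacle: the statement is essentially a one-line change of variables, and the only mild care is that $P_S(x-y)$ may be empty, in which case $P_{y+S}(x)$ is empty as well and the identity holds vacuously.
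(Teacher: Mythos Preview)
Your argument is correct: the translation identity for the metric projection follows by the change of variables $z=y+s$ together with $\|x-(y+s)\|=\|(x-y)-s\|$, and your care about the set-valued (possibly empty) interpretation is appropriate since $S$ is merely nonempty. The paper itself offers no proof of this statement---it is recorded as a \emph{Fact} and used directly---so there is nothing to compare against; your derivation is exactly the standard justification one would supply.
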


\begin{theorem}
\label{t:oprep}
Suppose that $A$ and $B$ are paramonotone,
that $(z_0,k_0)\in Z\times K$, and that $x\in X$.
Then the following hold.
\begin{enumerate}
\item
\label{t:oprep1}
$Z+K$ is convex and closed.
\item
\label{t:oprep1+}
$P_{Z+K}(x) = P_Z(x-k_0) + P_K(x-z_0)$.
\item
\label{t:oprep2}
If $(Z-Z)\perp K$, then
$P_{Z+K}(x) = P_Z(x) + P_K(x-z_0)$.
\item
\label{t:oprep3}
If $Z\perp(K-K)$, then
$P_{Z+K}(x) = P_Z(x-k_0) + P_K(x)$.
\end{enumerate}
\end{theorem}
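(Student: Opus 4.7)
The plan is to reduce everything to Fact~\ref{f:osum} by translating $Z$ and $K$ so that they contain the origin, apply the orthogonality that is already available, and then translate back using Fact~\ref{f:translate}. The key ingredient is Corollary~\ref{c:apes}\ref{c:apesii}, which under paramonotonicity delivers $(Z-Z)\perp(K-K)$ for free. Since $Z-z_0 \subseteq Z-Z$ and $K-k_0 \subseteq K-K$, the translates $Z-z_0$ and $K-k_0$ are automatically mutually orthogonal. They are also closed, convex, and nonempty (containing $0$) by Corollary~\ref{c:convex} and Corollary~\ref{c:apes}\ref{c:apesi-}.

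For \ref{t:oprep1} and \ref{t:oprep1+}, I would write
\[
Z+K = (z_0+k_0) + \big((Z-z_0)+(K-k_0)\big).
\]
Fact~\ref{f:osum} applied to the orthogonal closed convex sets $Z-z_0$ and $K-k_0$ immediately gives that $(Z-z_0)+(K-k_0)$ is closed and convex, and translation preserves both properties, yielding \ref{t:oprep1}. For \ref{t:oprep1+}, I would apply Fact~\ref{f:translate} to pull the outer translation $z_0+k_0$ outside the projection, then Fact~\ref{f:osum} to split into $P_{Z-z_0}+P_{K-k_0}$, and then Fact~\ref{f:translate} two more times to convert $P_{Z-z_0}$ and $P_{K-k_0}$ back to $P_Z$ and $P_K$ evaluated at appropriately shifted points. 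A direct cancellation of the $\pm z_0$ and $\pm k_0$ terms produces exactly $P_Z(x-k_0)+P_K(x-z_0)$.

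For \ref{t:oprep2} and \ref{t:oprep3}, I would start from \ref{t:oprep1+} and observe that under $(Z-Z)\perp K$, projection onto $Z$ is invariant under translation by elements of $K$. Specifically, fixing $z_* = P_Z(x)$ and arbitrary $z\in Z$ and $k\in K$, a short expansion gives
\[
\|x-k-z_*\|^2 - \|x-k-z\|^2 = \|x-z_*\|^2 - \|x-z\|^2 + 2\scal{z-z_*}{k},
\]
whose last term vanishes under the orthogonality hypothesis. Hence the minimizer over $z$ is unchanged, i.e., $P_Z(x-k_0)=P_Z(x)$, and substituting into \ref{t:oprep1+} yields \ref{t:oprep2}. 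Part \ref{t:oprep3} follows by the symmetric argument with the roles of $Z$ and $K$ (and of $z_0$ and $k_0$) interchanged, using $Z\perp(K-K)$ to conclude $P_K(x-z_0)=P_K(x)$.

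The main obstacle is conceptual rather than technical: seeing that one does \emph{not} need $Z\perp K$ or either of the stronger hypotheses in \ref{t:oprep2}/\ref{t:oprep3} in order to invoke Fact~\ref{f:osum}; the weaker orthogonality between the translates $Z-z_0$ and $K-k_0$ already suffices and is automatic from paramonotonicity. The extra hypotheses in \ref{t:oprep2} and \ref{t:oprep3} only serve to remove the shifts $-k_0$ and $-z_0$ from the right-hand side, not to license the splitting itself.
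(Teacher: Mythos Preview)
Your treatment of \ref{t:oprep1} and \ref{t:oprep1+} matches the paper's proof exactly: translate by $-(z_0+k_0)$, invoke Fact~\ref{f:osum} on the orthogonal pair $(Z-z_0,K-k_0)$ (orthogonality coming from Corollary~\ref{c:apes}\ref{c:apesii}), and translate back via Fact~\ref{f:translate}.

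For \ref{t:oprep2} and \ref{t:oprep3} your route differs from the paper's. The paper does \emph{not} pass through \ref{t:oprep1+}; instead it writes $Z+K = z_0 + \big((Z-z_0)+K\big)$ and reapplies Fact~\ref{f:osum} directly to the pair $(Z-z_0,K)$, which is legitimate precisely under the extra hypothesis $(Z-Z)\perp K$. Your argument instead deduces \ref{t:oprep2} from \ref{t:oprep1+} by proving the translation invariance $P_Z(x-k_0)=P_Z(x)$ via the identity
\[
\|x-k-z_*\|^2 - \|x-k-z\|^2 = \|x-z_*\|^2 - \|x-z\|^2 + 2\scal{z_*-z}{k}
\]
(note: the cross term carries the sign $\scal{z_*-z}{k}$, not $\scal{z-z_*}{k}$ as you wrote, though this is immaterial since the term vanishes). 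Both approaches are valid. The paper's is more uniform---the same Fact~\ref{f:osum}/Fact~\ref{f:translate} mechanism is run three times with three different decompositions---while yours has the structural advantage of exhibiting \ref{t:oprep2} and \ref{t:oprep3} as specializations of \ref{t:oprep1+}, making explicit that the extra hypotheses serve only to erase a shift in the argument of one projection.
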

\begin{proof}
\ref{t:oprep1}:
The convexity and closedness of $Z$ and $K$ follows
from Corollary~\ref{c:convex} and Corollary~\ref{c:apes}\ref{c:apesi-}.
By Corollary~\ref{c:apes}\ref{c:apesii},
\begin{equation}
(Z-z_0)\perp(K-k_0).
\end{equation}
Using Fact~\ref{f:osum},
\begin{equation}
\label{e:0827a}
Z+K-z_0-k_0\;\text{is convex and closed, and}\;
P_{Z+K-z_0-k_0} = P_{Z-z_0} + P_{K-k_0}.
\end{equation}
Hence $Z+K$ is convex and closed.
\ref{t:oprep1+}:
Using \eqref{e:0827a}, Fact~\ref{f:osum}, and Fact~\ref{f:translate}, 
we obtain
\begin{subequations}
\begin{align}
P_{Z+K}x &= P_{(z_0+k_0)+(Z+K-z_0-k_0)}x\\
&=z_0+k_0 + P_{(Z-z_0)+(K-k_0)}\big(x-(z_0+k_0)\big)\\
&= z_0+P_{Z-z_0}\big((x-k_0)-z_0\big)
+ k_0 + P_{K-k_0}\big((x-z_0)-k_0\big)\\
&=P_Z(x-k_0) + P_K(x-z_0).
\end{align}
\end{subequations}
\ref{t:oprep2}:
Using Fact~\ref{f:osum} and Fact~\ref{f:translate}, we have
\begin{subequations}
\begin{align}
P_{Z+K}x &= P_{z_0+(Z+K-z_0)}x\\
&=z_0 + P_{(Z-z_0)+K}(x-z_0)\\
&= z_0+ P_{Z-z_0}(x-z_0) + P_K(x-z_0)\\
&= P_Zx+P_K(x-z_0).
\end{align}
\end{subequations}
\ref{t:oprep3}: Argue analogously to the proof of \ref{t:oprep2}.
\end{proof}

\begin{remark}
Suppose that $A$ and $B$ are paramonotone and that $0\in K$.
Then Corollary~\ref{c:apes}\ref{c:apesii} implies that
$(Z-Z)\perp K-\{0\} = K$ and we thus may employ
either item~\ref{t:oprep1+} (with $k_0=0$) or item~\ref{t:oprep2} to
obtain the formula for $P_{Z+K}$.
\end{remark}

However, if $(Z-Z)\perp K$, then the next two examples show---in strikingly
different ways since $Z$ is either large or small---that
we cannot conclude that $0\in K$:

\begin{example}
Fix $u\in X$ and suppose that $(\forall x\in X)$ $Ax=u$
and $B=-A$. Then $A$ and $B$ are paramonotone, $A+B\equiv 0$,
and hence $Z=X$. Furthermore, $K=\{u\}$.
Thus if $u\neq 0$, then $K\not\perp X = (Z-Z)$.
\end{example}

\begin{example}
Let $U$ and $V$ be closed convex subsets of $X$
such that
\begin{equation}
0\notin U\cap V
\text{~and~}
U-V=X.
\end{equation}
(For example, suppose that $X=\RR$ and set $U=V=\left[1,+\infty\right[$.)
Now assume that $(A,B) = (N_U,N_V)^*$.
In view of Example~\ref{ex:cf},
$K = U\cap V$ and $Z = N_{\overline{U-V}}(0) = N_X(0)=\{0\}$.
Hence $Z$ is a singleton and thus $Z-Z=\{0\}\perp K$ while $0\notin K$.
\end{example}

\begin{theorem}
\label{t:lousypizza}
Suppose that $A$ and $B$ are paramonotone,
let $k_0\in K$, and let $x\in X$.
Then the following hold.
\begin{enumerate}
\item
\label{t:lousypizza1}
$J_AP_{Z+K}(x) = P_Z(x-k_0)$.
\item
\label{t:lousypizza2}
If $(Z-Z)\perp K$, then
$J_A\circ P_{Z+K}= P_Z$.
\end{enumerate}
\end{theorem}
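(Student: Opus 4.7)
The plan is to combine the decomposition of $P_{Z+K}$ obtained in Theorem~\ref{t:oprep} with the Minty-type identification of $J_A$ on $\Fix T$ coming from Theorem~\ref{t:mace2} and Corollary~\ref{c:2GB}. The key observation I would exploit is that, under paramonotonicity, Corollary~\ref{c:apes}\ref{c:apesi} turns $\gr \KK$ into the full rectangle $Z\times K$; consequently, for every $z\in Z$ and every $k\in K$, the pair $(z,k)$ lies in $\gr\KK$, so by Theorem~\ref{t:mace2} we have $z+k\in\Fix T$ and
\begin{equation*}
J_A(z+k) = z.
\end{equation*}
This is the single algebraic fact that drives both parts.

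To prove \ref{t:lousypizza1}, I would fix $z_0\in Z$ and apply Theorem~\ref{t:oprep}\ref{t:oprep1+} to write
\begin{equation*}
P_{Z+K}(x) = P_Z(x-k_0) + P_K(x-z_0).
\end{equation*}
Since $P_Z(x-k_0)\in Z$ and $P_K(x-z_0)\in K$, the boxed identity above (applied with $z:=P_Z(x-k_0)$ and $k:=P_K(x-z_0)$) yields
\begin{equation*}
J_A P_{Z+K}(x) = P_Z(x-k_0),
\end{equation*}
which is exactly the claim. For \ref{t:lousypizza2}, I would repeat the same computation but now invoke Theorem~\ref{t:oprep}\ref{t:oprep2} (available because of the extra assumption $(Z-Z)\perp K$), which gives $P_{Z+K}(x)=P_Z(x)+P_K(x-z_0)$; the same application of the boxed identity then yields $J_A P_{Z+K}(x) = P_Z(x)$.

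There is no real obstacle here beyond lining up the right prior results. The one point I would be careful about is to verify that the decomposition $P_{Z+K}(x)=(\text{something in }Z)+(\text{something in }K)$ is legitimate; this is taken care of by Theorem~\ref{t:oprep}\ref{t:oprep1}, which guarantees that $Z+K$ is convex and closed (so $P_{Z+K}$ is well defined and single-valued) and that the orthogonal-splitting formulas for its projection are valid, and by Corollary~\ref{c:apes}\ref{c:apesi+}, which ensures $Z+K=\Fix T$ so that $J_A$ can be applied coordinate-wise via the Minty parametrization of Theorem~\ref{t:mace2}.
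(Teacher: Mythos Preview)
Your proposal is correct and follows essentially the same approach as the paper: both decompose $P_{Z+K}(x)$ via Theorem~\ref{t:oprep} into a $Z$-part plus a $K$-part, and then show that $J_A$ applied to such a sum returns the $Z$-part. The only cosmetic difference is that you invoke the rectangle structure $\gr\KK=Z\times K$ together with the Minty parametrization of Theorem~\ref{t:mace2} to conclude $J_A(z+k)=z$, whereas the paper argues slightly more directly from Theorem~\ref{t:apes} that $k\in K=K_z\subseteq Az$, hence $z+k\in(\Id+A)z$ and $J_A(z+k)=z$; these are the same fact viewed from two sides.
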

\begin{proof}
Take an arbitrary $z_0\in Z$. 
\ref{t:lousypizza1}:
Set $z := P_Z(x-k_0)$.
Using Theorem~\ref{t:oprep}\ref{t:oprep1+}
and Theorem~\ref{t:apes}, we have
\begin{equation}
P_{Z+K}x - z =
P_{Z+K}x - P_Z(x-k_0) = P_K(x-z_0) \in K = K_z
\subseteq Az.
\end{equation}
Hence $P_{Z+K}x \in (\Id+A)z$
$\Leftrightarrow$
$z = J_AP_{Z+K}x$
$\Leftrightarrow$
$P_Z(x-k_0) = J_AP_{Z+K}x$.

\ref{t:lousypizza2}:
This time, let us set $z := P_Zx$.
Using Theorem~\ref{t:oprep}\ref{t:oprep2}
and Theorem~\ref{t:apes}, we have
\begin{equation}
P_{Z+K}x - z =
P_{Z+K}x - P_Zx = P_K(x-z_0) \in K = K_z
\subseteq Az.
\end{equation}
Hence 
$P_{Z+K}x \in (\Id+A)z$
$\Leftrightarrow$
$z = J_AP_{Z+K}x$
$\Leftrightarrow$
$P_Zx = J_AP_{Z+K}x$.
\end{proof}

\begin{corollary}
\label{c:lousypizza}
Suppose that $A$ and $B$ are paramonotone,
and that $0\in K$.
Then
\begin{equation}
P_Z = J_AP_{Z+K}.
\end{equation}
%$(\forall x\in X)$ $P_Z(x) = J_AP_{Z+K}x$.
\end{corollary}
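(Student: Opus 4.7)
The plan is to invoke Theorem~\ref{t:lousypizza}\ref{t:lousypizza1} directly, taking the specific choice $k_0 := 0$, which is permitted precisely because the standing hypothesis $0 \in K$ supplies this as a valid dual solution. Substituting this into the identity $J_A P_{Z+K}(x) = P_Z(x - k_0)$ gives $J_A P_{Z+K}(x) = P_Z(x)$ for every $x \in X$, which is exactly the claimed equation. So the corollary is essentially a one-line specialization of the preceding theorem, with the hypothesis $0 \in K$ chosen precisely to eliminate the translation by $k_0$.

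As a sanity check, I would also verify that the argument can be run through part~\ref{t:lousypizza2} of Theorem~\ref{t:lousypizza} instead. The orthogonality hypothesis there requires $(Z-Z) \perp K$. To get this from Corollary~\ref{c:apes}\ref{c:apesii} (which gives only $(Z-Z) \perp (K-K)$), observe that $0 \in K$ implies $K = K - \{0\} \subseteq K - K$, so $(Z-Z) \perp K$ follows. Then Theorem~\ref{t:lousypizza}\ref{t:lousypizza2} gives $J_A \circ P_{Z+K} = P_Z$ on the nose. Both routes work; the first is cleaner because it bypasses any orthogonality bookkeeping.

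There is no genuine obstacle: the content of the corollary is absorbed in Theorem~\ref{t:lousypizza} and the only thing to notice is that $0$ is an admissible $k_0$. The paramonotonicity of $A$ and $B$ is already built into the theorem's hypotheses (and is needed to ensure that $Z+K$ is closed and convex with the correct projection decomposition via Theorem~\ref{t:oprep}), so nothing further needs to be extracted from it at this stage.
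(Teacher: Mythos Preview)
Your proposal is correct and matches the paper's intent: the corollary is stated without an explicit proof because it follows immediately from Theorem~\ref{t:lousypizza}\ref{t:lousypizza1} with $k_0=0$. Your second route via part~\ref{t:lousypizza2} is also valid and mirrors the observation the paper makes in the Remark preceding Theorem~\ref{t:lousypizza} (that $0\in K$ forces $(Z-Z)\perp K$ via Corollary~\ref{c:apes}\ref{c:apesii}).
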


Specializing the previous result to normal cone operators,
we recover the consistent case of
\cite[Corollary~3.9]{BCL04}.

\begin{example}
\label{ex:summerland}
Suppose that $A=N_U$ and $B=N_V$,
where $U$ and $V$ are closed convex subsets of $X$
such that $U\cap V\neq\varnothing$.
Then $Z=U\cap V$, $K= N_{\overline{U-V}}(0)$, and
\begin{equation}
P_{Z} = P_UP_{Z+K} = P_UP_{\Fix T}.
\end{equation}
\end{example}
\begin{proof}
This follows from Example~\ref{ex:cf},
Corollary~\ref{c:apes}\ref{c:apesi+},
and Corollary~\ref{c:lousypizza}.
\end{proof}

\section{Subdifferential operators}

\label{s:Fenchel}

In this section, we assume that
\begin{equation}
A = \partial f
\text{~and~}
B = \partial g,
\end{equation}
where $f$ and $g$ belong to $\Gamma$.
We consider the \emph{primal problem}
\begin{equation}
\label{e:Fprimal}
\minimize{x\in X}{f(x)+g(x)}
\end{equation}
the associated \emph{Fenchel dual problem}
\begin{equation}
\label{e:Fdual}
\minimize{x^*\in X}{f^*(x^*)+g^*(-x^*)},
\end{equation}
the \emph{primal} and \emph{dual optimal values}
\begin{equation}
\mu = \inf(f+g)(X)
\text{~and~}
\mu^* = \inf(f^*+g^{*\veet})(X).
\end{equation}
Note that
\begin{equation}
\mu \geq -\mu^*.
\end{equation}
Following  \cite{BGWa} and \cite{BGWb}, we say that
\emph{total duality} holds if
$\mu=-\mu^*\in\RR$, the primal problem \eqref{e:Fprimal}
has a solution, and the dual problem \eqref{e:Fdual}
has a solution.

\begin{theorem}[total duality]
Suppose that $A=\partial f$ and $B=\partial g$,
where $f$ and $g$ belong to $\Gamma$.
Then
$Z\neq\varnothing$
$\Leftrightarrow$
\emph{total duality} holds,
in which case
$Z$ coincides with the set of solutions to the primal problem
\eqref{e:Fprimal}.
\end{theorem}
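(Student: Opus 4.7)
The plan is to use the Fenchel-Young equality characterization of the subdifferential, namely that for $h\in\Gamma$ and $z\in X$, one has $k\in\partial h(z)$ if and only if $h(z)+h^*(k)=\scal{z}{k}$, together with the weak duality inequality $\mu\geq -\mu^*$. The key trick is to add two Fenchel-Young identities corresponding to a primal-dual pair and observe that the cross terms $\scal{z}{k}$ and $\scal{z}{-k}$ cancel. I expect the argument to split naturally into the two implications plus the identification of $Z$ with the primal solution set; no serious obstacle is anticipated, since the passage between primal-dual solutions and subdifferential inclusions is exactly what Fenchel-Young was designed to encode.

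For the implication $Z\neq\varnothing\Rightarrow\text{total duality}$, I would pick $z\in Z$ and use Proposition~\ref{p:momday}\ref{p:momdayi} to produce $k\in K_z$, so that $k\in\partial f(z)$ and $-k\in\partial g(z)$. Applying Fenchel-Young twice and summing yields
\begin{equation}
f(z)+g(z)+f^*(k)+g^*(-k)=\scal{z}{k}+\scal{z}{-k}=0.
\end{equation}
Since $f(z)+g(z)\geq \mu$, $f^*(k)+g^{*\veet}(k)\geq \mu^*$, and $\mu\geq-\mu^*$, the above identity forces equality throughout. In particular $\mu=-\mu^*\in\RR$, $z$ solves \eqref{e:Fprimal}, and $k$ solves \eqref{e:Fdual}, so total duality holds and simultaneously $Z\subseteq\argmin(f+g)$.

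For the converse $\text{total duality}\Rightarrow Z\neq\varnothing$, I would take any primal solution $z$ and any dual solution $k$ supplied by the assumption of total duality, so that $f(z)+g(z)=\mu=-\mu^*=-\big(f^*(k)+g^*(-k)\big)$. Rearranging gives
\begin{equation}
\big(f(z)+f^*(k)-\scal{z}{k}\big)+\big(g(z)+g^*(-k)-\scal{z}{-k}\big)=0,
\end{equation}
and since each bracket is nonnegative by Fenchel-Young, both vanish. Hence $k\in\partial f(z)$ and $-k\in\partial g(z)$, that is, $0\in (A+B)z$, so $z\in Z$. Notice that this argument worked for an \emph{arbitrary} primal solution $z$, which establishes the reverse inclusion $\argmin(f+g)\subseteq Z$ and completes the identification $Z=\argmin(f+g)$ stated in the ``in which case'' clause.
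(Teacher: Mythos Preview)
Your proof is correct and follows essentially the same approach as the paper: both directions hinge on the Fenchel--Young equality characterization of the subdifferential, summing the two identities so that the inner-product terms cancel. The only difference is organizational---the paper first argues that $z$ (resp.\ $k$) is optimal via the inclusion $\partial f+\partial g\subseteq\partial(f+g)$ and then derives $\mu+\mu^*=0$ from Fenchel--Young, whereas you obtain optimality and zero gap simultaneously from the inequality chain $0=(f(z)+g(z))+(f^*(k)+g^{*\veet}(k))\geq\mu+\mu^*\geq 0$; both are equally valid, and your packaging of the two inclusions $Z\subseteq\argmin(f+g)$ and $\argmin(f+g)\subseteq Z$ into the ``$\Rightarrow$'' and ``$\Leftarrow$'' parts respectively is a clean way to handle the ``in which case'' clause.
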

\begin{proof}
Observe that $(\partial f)^{-1}= \partial f^*$ and
that $(\partial g)^{-\ovee} = (\partial g^*)^\ovee
= \partial(g^{*\veet})$.

``$\Rightarrow$'':
Suppose that $Z\neq\varnothing$, and let $z\in Z$.
Then $0\in\partial f(z) + \partial g(z) \subseteq
\partial(f+g)(z)$.
Hence $z$ solves the primal problem \eqref{e:Fprimal},
and
\begin{equation}
\mu = f(z) + g(z).
\end{equation}
Take $k\in K=K_z = (\partial f)(z) \cap (-\partial g)(z)$.
First, we note that
$0\in (\partial f)^{-1}(k) + (\partial g)^{-\ovee}(k)
= \partial f^*(k) + \partial g^{*\veet}(k)
\subseteq \partial (f^*+g^{*\veet})(k)$ and so
$k$ solves the Fenchel dual problem \eqref{e:Fdual}.
Thus,
\begin{equation}
\mu^* = f^*(k) + g^{*\veet}(k).
\end{equation}
Moreover, $k\in \partial f(z)$ and $-k\in\partial g(z)$, i.e.,
$f(z)+f^*(k)=\scal{z}{k}$ and $g(z) + g^*(-k)=\scal{z}{-k}$.
Adding these equations gives
$0 = f(z)+f^*(k) + g(z) + g^{*\veet}(k) = \mu+\mu^*$.
This verifies total duality.

``$\Leftarrow$'':
Suppose we have total duality. Then there exists
$x\in \dom f\cap \dom g$ and $x^*\in \dom f^*\cap \dom g^{*\veet}$
such that
\begin{equation}
\label{e:lastbeach3}
f(x) + g(x) = \mu = -\mu^* = -f^*(x^*) - g^{*\veet}(x^*)\in\RR.
\end{equation}
Hence $0 = (f(x)+f^*(x^*)) + (g(x)+g^*(-x^*))
\geq \scal{x}{x^*} + \scal{x}{-x^*}=0$.
Therefore, using convex analysis and Proposition~\ref{p:momday},
\begin{equation}
\big(x^*\in\partial f(x) \text{~and~} -x^*\in\partial g(x)\big)
\;\Leftrightarrow\;
x^*\in K_x
\;\Leftrightarrow\;
x \in Z_{x^*}.
\end{equation}
Hence $x\in Z$.

Note that
$Z=\zer(\partial f + \partial g) \subseteq
\zer\partial (f+g)$ since
$\gr(\partial f + \partial g)\subseteq \gr\partial(f+g)$.
Hence $Z$ is a subset of the set of primal solutions.
Conversely, if $x$ is a primal solution and $x^*$ is a dual solution,
then
\eqref{e:lastbeach3} holds and the rest of the proof
of ``$\Leftarrow$'' shows that $x\in Z$.
Altogether, $Z$ coincides with the set of primal solutions.
\end{proof}

\begin{remark}[sufficient conditions]
On the one hand,
\begin{equation}
\label{e:lastbeach1}
\text{
the primal problem has at least one solution}
\end{equation} if
$\dom f\cap\dom g\neq\emp$ and one of the following holds
(see, e.g., \cite[Corollary~11.15]{BC2011}):
(i) $f$ is supercoercive;
(ii) $f$ is coercive and $g$ is bounded below;
(iii) $0 \in \sri(\dom f^* + \dom g^*)$
(by, e.g., \cite[Proposition~15.13]{BC2011} and
since \eqref{e:Fprimal} is the 
\emph{Fenchel dual problem} of \eqref{e:Fdual}). 
On the other hand,
\begin{equation}
\label{e:lastbeach2}
\text{the sum rule $\partial(f+g) = \partial f + \partial
g$ holds}
\end{equation}
whenever one of the following is satisfied
(see \cite[Corollary~16.38]{BC2011}):
(i) $0\in\sri(\dom f - \dom g)$;
(ii) $\dom f \cap \inte\dom g\neq\varnothing$;
(iii) $\dom g = X$;
(iv) $X$ is finite-dimensional and $\reli\dom f \cap \reli\dom
g\neq\varnothing$.
If both \eqref{e:lastbeach1} and \eqref{e:lastbeach2} hold, then
$Z\neq\varnothing$ and  $Z$ coincides with the set of primal
solutions.
\end{remark}

\section{Algorithms and Eckstein-Ferris-Robinson Duality}

\label{s:final}

In this last section, we sketch first algorithmic consequences
and then conclude by commenting 
on the applicability of our work to a more general duality
framework.

\begin{theorem}[abstract algorithm]
\label{t:abstract}
Suppose that $A$ and $B$ are paramonotone.
Let $(x_n)_\nnn$ be a sequence
such that
$(x_n)_\nnn$ converges (weakly or strongly) to $x\in \Fix T$
and $(J_Ax_n)_\nnn$ converges (weakly or in norm) to $J_Ax$.
Then the following hold.
\begin{enumerate}
\item
$(\forall k\in K)$ $J_Ax=P_Z(x-k)$.
\item
If $(Z-Z)\perp K$, then $J_Ax = P_Zx$.
\end{enumerate}
\end{theorem}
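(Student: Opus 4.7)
The plan is to observe that, despite the hypothesis being phrased in terms of a convergent sequence $(x_n)_\nnn$, the only information actually needed about the limit is the fact that $x\in\Fix T$; the sequence is there because in applications this $x$ arises as the limit of an iterative splitting scheme. Thus the entire argument will reduce to a short chain of applications of the machinery already developed in Sections~\ref{s:para} and \ref{s:poss}.

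First I would recall that, because $A$ and $B$ are paramonotone, Corollary~\ref{c:apes}\ref{c:apesi+} gives the clean ``rectangle'' description
\begin{equation*}
\Fix T \;=\; Z+K.
\end{equation*}
In particular $x\in Z+K$, so trivially $P_{Z+K}(x)=x$. This is the key bridge: it converts the fixed-point information on $x$ into a projection identity suitable for feeding into Theorem~\ref{t:lousypizza}.

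Next, for part (i), I would fix an arbitrary $k\in K$ and invoke Theorem~\ref{t:lousypizza}\ref{t:lousypizza1} (which itself requires exactly the paramonotonicity of $A$ and $B$, and nothing else): it yields $J_A P_{Z+K}(x) = P_Z(x-k)$, and substituting $P_{Z+K}(x)=x$ from the previous step gives $J_A x = P_Z(x-k)$, as required. For part (ii), under the extra hypothesis $(Z-Z)\perp K$, I would similarly apply Theorem~\ref{t:lousypizza}\ref{t:lousypizza2} to conclude $J_A\circ P_{Z+K} = P_Z$, and again the identity $P_{Z+K}(x)=x$ immediately yields $J_A x = P_Z x$.

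The main (nominal) obstacle is simply verifying that the hypotheses of Theorem~\ref{t:lousypizza} are in force; since paramonotonicity of $A$ and $B$ is assumed, and $K\neq\varnothing$ is guaranteed by $x\in\Fix T$ together with Theorem~\ref{t:mace2} (which supplies the pair $(J_A x,\,x-J_A x)\in\gr\KK$, so that $x-J_A x\in K$), there is nothing further to check. The convergence assumptions on $(x_n)_\nnn$ and $(J_A x_n)_\nnn$ play no role in the proof of the two displayed identities themselves; they are recorded only so that the theorem may be read as an assertion about the asymptotic behaviour of an actual algorithm, in which case the above reasoning applied to the limit point delivers the stated projection formulas.
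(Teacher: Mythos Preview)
Your proposal is correct and follows exactly the paper's approach: the paper's proof is the single line ``Combine Corollary~\ref{c:apes}\ref{c:apesi+} with Theorem~\ref{t:lousypizza},'' and you have unpacked precisely this combination, including the observation that $x\in\Fix T=Z+K$ forces $P_{Z+K}(x)=x$. Your additional remarks about the nominal role of the convergence hypotheses and the nonemptiness of $K$ are accurate commentary but not part of the paper's terse proof.
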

\begin{proof}
Combine Corollary~\ref{c:apes}\ref{c:apesi+}
with Theorem~\ref{t:lousypizza}.
\end{proof}

We provide three examples.

\begin{example}[Douglas-Rachford algorithm]
Suppose that $A$ and $B$ are paramonotone and that 
the sequence $(x_n)_\nnn$ is generated by
$(\forall\nnn)$ $x_{n+1} = Tx_n$.
The hypothesis in Theorem~\ref{t:abstract}
is satisfied, and the convergence of the
sequences is with respect to the \emph{weak topology}
\cite{Svaiter}.
See also \cite{B2011} for a much simpler proof and
\cite[Theorem~25.6]{BC2011} for a powerful generalization.
\end{example}

\begin{example}[Halpern-type algorithm]
\label{ex:Halpern}
Suppose that $A$ and $B$ are paramonotone and that 
the sequence $(x_n)_\nnn$ is generated by
$(\forall\nnn)$
$x_{n+1} = (1-\lambda_n)Tx_n + \lambda_n y$,
where $(\lambda_n)_\nnn$ is a
sequence of parameters in $\zeroun$ and $y\in X$ is given.
Under suitable assumptions on $(\lambda_n)_\nnn$,
it is known (see, e.g., \cite{Halpern}, \cite{Wittmann})
that
$x_n\to x := P_{\Fix T}y$ with respect to the \emph{norm topology}.
Since $J_A$ is (firmly) nonexpansive, it is clear
that the hypothesis of Theorem~\ref{t:abstract} holds.
Furthermore, $J_Ax_n\to J_Ax = J_AP_{\Fix T}y$.
Thus, if $k_0\in K$, then
$J_Ax_n\to P_Z(y-k_0)$ by Theorem~\ref{t:lousypizza}\ref{t:lousypizza1}.
And if $(Z-Z)\perp K$, then $J_Ax_n\to P_Zy$
by Theorem~\ref{t:lousypizza}\ref{t:lousypizza2}.
\end{example}

\begin{example}[Haugazeau-type algorithm]
This is similar to Example~\ref{ex:Halpern}
in that $x_n\to x:= P_{\Fix T}y$ with respect
to the \emph{norm topology} and where $y\in X$ is given.
For the precise description of the (somewhat complicated)
update formula for $(x_n)_\nnn$,
we refer the reader to \cite[Section~29.2]{BC2011} or
\cite{BC01}; see also \cite{Haugazeau}.
Once again, we have  $J_Ax_n\to J_Ax = J_AP_{\Fix T}y$ and
thus, if $k_0\in K$, then
$J_Ax_n\to P_Z(y-k_0)$ by Theorem~\ref{t:lousypizza}\ref{t:lousypizza1}.
And if $(Z-Z)\perp K$, then $J_Ax_n\to P_Zy$
by Theorem~\ref{t:lousypizza}\ref{t:lousypizza2}.
Consequently, in the context of Example~\ref{ex:summerland},
we obtain $P_Ux_n \to P_{U\cap V}y$; in fact,
this is \cite[Theorem~3.3]{BCL06}, which is the main result of
\cite{BCL06}.
\end{example}

Turning to Eckstein-Ferris-Robinson duality, 
let us assume the following:
\begin{itemize}
\item 
$Y$ is a real Hilbert space (and possibly different from $X$);
\item
$C$ is a maximally monotone operator on $Y$; 
\item
$L\colon X\to Y$ is continuous and linear. 
\end{itemize}
Eckstein and Ferris \cite{EckFer}
as well as Robinson \cite{Robinson99} consider the problem of finding
zeros of 
\begin{equation}
A+L^*CL. 
\end{equation}
This framework is more flexible than the Attouch-Th\'era framework,
which corresponds to the case when $Y=X$ and $L=\Id$. 
Note that just as Attouch-Th\'era duality relates to 
classical Fenchel duality in the subdifferential case
(see Section~\ref{s:Fenchel}),
the Eckstein-Ferris-Robinson duality pertains to classical
\emph{Fenchel-Rockafellar duality} for the problem of minimizing
$f+h\circ L$ when 
$f\in\Gamma_X$ and $h\in\Gamma_Y$,  
and $A=\partial f$ and $C=\partial h$.

The results in the previous sections can be used in
the Eckstein-Ferris-Robinson framework thanks to 
items \ref{p:gettingthere2} and \ref{p:gettingthere3} of 
the following result, which allows us to set $B = L^*CL$. 

\begin{proposition}
\label{p:gettingthere}
The following hold.
\begin{enumerate}
\item 
\label{p:gettingthere1}
If $C$ is paramonotone, then $L^*CL$ is paramonotone.
\item 
\label{p:gettingthere2}
{\rm \textbf{(Pennanen)}}
If $\RR_{++}(\ran L-\dom C)$ is a closed subspace of $Y$,
then $L^*CL$ is maximally monotone. 
\item 
\label{p:gettingthere3}
If $C$ is paramonotone and  $\RR_{++}(\ran L-\dom C)$ is a closed
subspace of $Y$, then $L^*CL$ is maximally monotone and paramonotone.
\end{enumerate}
\end{proposition}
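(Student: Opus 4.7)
The plan is to handle the three items in order, observing that \ref{p:gettingthere3} is just the conjunction of \ref{p:gettingthere1} and \ref{p:gettingthere2}, so the real work is in the first two items. Throughout, the key algebraic fact to exploit is that for any $x\in X$ and any $x^*\in(L^*CL)x$ there exists $u^*\in C(Lx)$ with $x^* = L^*u^*$, and conversely. This lets us convert inner products in $X$ to inner products in $Y$ via the adjoint identity $\scal{x}{L^*u^*}=\scal{Lx}{u^*}$.

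For \ref{p:gettingthere1} (paramonotonicity is preserved under $L^*(\cdot)L$), I would fix $x,y\in X$ and $x^*\in(L^*CL)x$, $y^*\in(L^*CL)y$ with $\scal{x-y}{x^*-y^*}=0$. Picking representatives $u^*\in C(Lx)$, $v^*\in C(Ly)$ with $x^*=L^*u^*$, $y^*=L^*v^*$, the adjoint identity rewrites the orthogonality as $\scal{Lx-Ly}{u^*-v^*}=0$. Paramonotonicity of $C$ then delivers $u^*\in C(Ly)$ and $v^*\in C(Lx)$, from which applying $L^*$ yields $x^*\in(L^*CL)y$ and $y^*\in(L^*CL)x$, which is exactly paramonotonicity of $L^*CL$.

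For \ref{p:gettingthere2}, since the result is attributed to Pennanen, I would simply invoke his theorem; the constraint qualification $\RR_{++}(\ran L-\dom C)$ being a closed linear subspace of $Y$ is precisely the hypothesis under which Pennanen proves maximal monotonicity of the composition $L^*CL$ (see \cite{Pennanen}, or equivalently \cite[Section~24.1]{BC2011} for a textbook account). No additional work is needed beyond citing this result, since monotonicity of $L^*CL$ is elementary (it is an immediate consequence of the same adjoint identity used above) and maximality is the nontrivial content.

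Finally, \ref{p:gettingthere3} is obtained by applying \ref{p:gettingthere2} to get maximal monotonicity and \ref{p:gettingthere1} to get paramonotonicity simultaneously. The only potential subtlety is confirming that $\dom(L^*CL)=L^{-1}(\dom C)$ is nonempty and consistent with Pennanen's hypothesis, but the closed-subspace condition already forces $\ran L\cap\dom C\neq\varnothing$, so there is no genuine obstacle. The main obstacle overall is really just correctly citing Pennanen's theorem for \ref{p:gettingthere2}; the rest is routine bookkeeping with the adjoint.
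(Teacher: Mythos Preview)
Your proposal is correct and follows essentially the same route as the paper's proof: for \ref{p:gettingthere1} the paper also lifts to representatives $y_i^*\in C(Lx_i)$ with $x_i^*=L^*y_i^*$, uses the adjoint identity to transfer the inner product to $Y$, and invokes paramonotonicity of $C$; for \ref{p:gettingthere2} the paper simply cites \cite[Corollary~4.4.(c)]{Pennanen00}; and \ref{p:gettingthere3} is the conjunction. The only cosmetic difference is that the paper explicitly records the monotonicity of $L^*CL$ as part of the proof of \ref{p:gettingthere1} (since paramonotonicity presupposes monotonicity), whereas you defer that remark to your discussion of \ref{p:gettingthere2}; this is purely organizational.
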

\begin{proof}
\ref{p:gettingthere1}:
Take $x_1$ and $x_2$ in $X$,
and suppose that $x_1^*\in L^*CLx_1$ and $x_2^*\in L^*CLx_2$. 
Then there exist $y_1^*\in CLx_1$ and $y_2^*\in CLx_2$
such that $x_1^*=L^*y_1^*$ and $x_2^*=L^*y_2^*$. 
Thus, 
$\scal{x_1-x_2}{x_1^*-x_2^*}=\scal{x_1-x_2}{L^*y_1^*-L^*y_2^*}
=\scal{Lx_1-Lx_2}{y_1^*-y_2^*}\geq 0$ because $C$ is monotone. 
Hence $L^*CL$ is monotone. 
Now suppose furthermore that $\scal{x_1-x_2}{x_1^*-x_2^*}=0$.
Then $\scal{Lx_1-Lx_2}{y_1^*-y_2^*}=0$
and the paramonotonicity of $C$ yields
$y_2^*\in C(Lx_1)$ and $y_1^*\in C(Lx_2)$. 
Therefore,
$x_2^*=L^*y_2^*\in L^*CLx_1$ and 
$x_1^*=L^*y_1^*\in L^*CLx_2$. 

\ref{p:gettingthere2}:
See \cite[Corollary~4.4.(c)]{Pennanen00}. 

\ref{p:gettingthere3}:
Combine \ref{p:gettingthere1} and \ref{p:gettingthere2}. 
\end{proof}

\section*{Acknowledgments}
Part of this research benefited from discussions
during a research visit of HHB
related to a study leave at the Technical University of Chemnitz.
HHB thanks Dr.~Gert Wanka, his optimization group,
and the Faculty of Mathematics for their hospitality.
HHB was partially supported by the Natural Sciences and
Engineering Research Council of Canada and by the Canada Research Chair
Program.
RIB was partially supported by the German Research Foundation.
WLH was partially
supported by the Natural Sciences and Engineering Research Council
of Canada.
WMM was partially supported
by a University Graduate Fellowship of UBC.

%\small

\end{document}